\documentclass[leqno,11pt]{amsart}
\usepackage{graphicx,pstricks,pst-plot}
\usepackage[utf8]{inputenc}
\usepackage[normalem]{ulem}
\usepackage{epsfig}
\usepackage{mathtools}
\usepackage{float}
\mathtoolsset{showonlyrefs}
\usepackage{amsfonts}
\usepackage{graphicx}
\usepackage{graphicx,color}
\usepackage{footnote}
\usepackage{amsthm}
\usepackage{amsbsy}
\usepackage{amsmath}
\usepackage{amssymb}
\usepackage{rotating}
\usepackage{amsmath}
\usepackage{array}
\usepackage{booktabs}
\usepackage[colorlinks=true,citecolor=blue,linkcolor=blue,pagebackref=true]{hyperref}
\usepackage[T1]{fontenc}
\hypersetup{colorlinks=true,citecolor=red,linkcolor=blue,filecolor=blue,urlcolor=blue}

\usepackage[english]{babel}

\usepackage[
lmargin=3.0cm,
rmargin=3.0cm,
tmargin=4.0cm,
bmargin=4.0cm
]{geometry}

\usepackage{tikz}
\usepackage{pgfplots}
\pgfplotsset{compat=1.18}

\theoremstyle{plain}
\newtheorem{theorem}{\indent\sc Theorem}[section]
\newtheorem{lemma}[theorem]{\indent\sc Lemma}
\newtheorem{corollary}[theorem]{\indent\sc Corollary}
\newtheorem{proposition}[theorem]{\indent\sc Proposition}

\theoremstyle{definition}

\newtheorem{remark}[theorem]{\indent\sc Remark}

\makeatletter
\@namedef{subjclassname@2020}{%
\textup{2020} Mathematics Subject Classification}
\makeatother

\numberwithin{equation}{section}

\title[Heintze--Karcher Inequalities and Serrin-Type Rigidity]
{Heintze--Karcher-Type Inequalities for the $p$-Laplacian and Serrin-Type Rigidity}

\author[M.N. Soares]{Matheus N. Soares$^{1, \ast}$}

\address{$^2$Matheus N. Soares \endgraf 
Escola Politécnica de Pernambuco, \endgraf
Universidade de Pernambuco,
50.720-001, Recife, Pernambuco, Brazil.}

\email{matheus.nsoares@upe.br}

\author[W.F.C. Barboza]{Weiller F.C. Barboza$^{2}$}

\address{$^1$Weiller F. Chaves Barboza \endgraf 
Departamento de Matem\'atica, \endgraf
Universidade Federal de Campina Grande,
58.429-970 Campina Grande, Para\'{\i}ba, Brazil.} 

\email{weiller@mat.ufcg.edu.br}

\subjclass[2020]{Primary 53C21; Secondary 35J92, 53C24, 35N25.}

\keywords{$p$-Laplacian; Heintze--Karcher inequality; Reilly-type identity; Ricci curvature; integral conditions; geometric rigidity.}

\thanks{$^\ast$Corresponding author.}

\begin{document}

\begin{abstract}
We establish a parameterized Heintze--Karcher-type inequality for positive solutions of nonlinear Dirichlet problems involving the $p$-Laplacian, with $p\geq2$, on bounded Riemannian domains. Under a Ricci curvature lower bound, positive boundary mean curvature, and suitable structural and approximation assumptions, the estimate follows from a regularized Reilly-type identity and a weighted Hessian inequality. We compare the resulting bound with previous
estimates and investigate the geometry associated with equality. For $p=2$, an exact deficit identity allows the pointwise condition $f'\leq nk$ to be replaced by a weighted integral condition. As applications, we obtain Heintze--Karcher and Soap Bubble-type rigidity results, with equality forcing the domain to be a metric ball and the solution to be radial.
\end{abstract}

\maketitle
\tableofcontents
\vspace{-5em}

\section{Introduction}

Among nonlinear elliptic operators, the $p$-Laplacian plays a central role as a natural generalization of the classical Laplace--Beltrami operator. On a Riemannian manifold $(M^n,g)$, it is defined by
\begin{equation}
\Delta_p u
=
\operatorname{div}\left(|\nabla u|^{p-2}\nabla u\right),
\qquad 1<p<\infty.
\end{equation}
For a bounded domain $\Omega\subset M$, this operator arises in the Euler--Lagrange equation associated with the $p$-energy
\begin{equation}
E_p(u)
=
\frac{1}{p}\int_\Omega |\nabla u|^p\,dV_g,
\qquad
u\in W^{1,p}_0(\Omega),
\end{equation}
where $dV_g$ denotes the Riemannian volume measure and $W^{1,p}_0(\Omega)$ is the closure of $C_c^\infty(\Omega)$ in the $W^{1,p}$-norm. When $p=2$, one recovers the Laplace--Beltrami operator and the usual Dirichlet energy. For $p\neq2$, the operator is nonlinear and is degenerate when $p>2$ or singular when $1<p<2$ at points where $\nabla u=0$. Besides its intrinsic interest in nonlinear potential theory, the $p$-Laplacian appears in models of non-Newtonian fluids, nonlinear elasticity, torsional creep, and nonlinear diffusion; see, for instance, \cite{Lindqvist:2017}.

In this paper, we investigate the nonlinear Dirichlet problem
\begin{equation}\label{eq:intro_problem}
\begin{cases}
\Delta_p u=-f(u) & \text{in }\Omega,\\
u=0             & \text{on }\partial\Omega,
\end{cases}
\end{equation}
where $f$ is a prescribed nonlinearity. Our purpose is to relate analytic properties of solutions of \eqref{eq:intro_problem} to geometric quantities associated with $\Omega$, particularly the mean curvature of its boundary and the Ricci curvature of the ambient manifold. The precise assumptions on the domain, the nonlinearity, and the solutions are stated in the corresponding results.

From a geometric perspective, a central motivation for this study is the Heintze--Karcher inequality. This inequality and its generalizations have become fundamental tools in the study of geometric variational problems, constant mean curvature hypersurfaces, and overdetermined elliptic equations; see \cite{Heintze:78,ros1987compact,borg,julian,qiu,deLima:2026}. Our approach combines a regularized Reilly-type identity with a weighted Hessian inequality to obtain a parameterized Heintze--Karcher inequality. We compare the resulting estimate with previous inequalities and analyze its equality case. In the case $p=2$, we also introduce an error term $\mathcal{E}(u)$ and derive associated Serrin-type and soap-bubble rigidity results. Examples illustrate the role of the structural assumptions imposed on the nonlinearity.

The paper is organized as follows. In Section~2, we introduce the notation and preliminary definitions used throughout the paper. In Section~3, we establish the regularized Reilly-type identity and the weighted Hessian inequality required in the subsequent arguments. Section~4 contains the parameterized Heintze--Karcher inequality, its comparison with previous estimates, and the analysis of necessary conditions for equality. In Section~5, we specialize to the Laplace--Beltrami operator, introduce the error term $\mathcal{E}(u)$, and derive an integral refinement of the Heintze--Karcher inequality together with associated rigidity results, including a Soap Bubble-type theorem. Finally, Section~6 presents an example illustrating that the derivative bound on the nonlinearity need not hold outside the range of the solution.

\section{Preliminaries}

Throughout this paper, $(M^n,g)$ denotes a connected smooth $n$-dimensional Riemannian manifold, with $n\geq2$, and $\Omega\Subset M$ denotes a domain with $C^2$-boundary, unless otherwise stated. Here, $\Omega\Subset M$ means that $\overline{\Omega}$ is compact. All differential operators are understood with respect to the Riemannian metric $g$. We denote by $\nabla$ the Levi-Civita connection of $M$ and also use $\nabla u$ for the gradient of a function $u$. For $u\in C^2(\Omega)$, its Hessian is defined by
\begin{equation}
{\rm Hess}\,u(X,Y)
=
\langle\nabla_X\nabla u,Y\rangle,
\end{equation}
and its Laplace--Beltrami operator is $\Delta u={\rm tr}({\rm Hess}\,u).$ We also regard ${\rm Hess}\,u$ as a self-adjoint endomorphism through the metric identification, so that
\begin{equation}
\langle{\rm Hess}\,u(X),Y\rangle
=
{\rm Hess}\,u(X,Y).
\end{equation}
All inner products and norms are taken with respect to $g$. Following~\cite{ONeill:1983}, the curvature tensor of $M$ is defined by
\begin{equation}
R(X,Y)Z
=
\nabla_{[X,Y]}Z-[\nabla_X,\nabla_Y]Z,
\end{equation}
for all vector fields $X,Y,Z\in\mathfrak{X}(M)$, where $[X,Y]$ denotes the Lie bracket and
$$[\nabla_X,\nabla_Y]=\nabla_X\nabla_Y-\nabla_Y\nabla_X.$$
With this convention, the Ricci tensor is given by
\begin{equation}
{\rm Ric}(X,Y)
=
\sum_{i=1}^{n}\langle R(X,e_i)Y,e_i\rangle,
\end{equation}
where $\{e_i\}_{i=1}^{n}$ is a local orthonormal frame. Along $\partial\Omega$, we denote by $\eta$ the outward unit normal vector field. We adopt the convention
\begin{equation}
\mathcal A_\eta(X)=\nabla_X\eta,
\qquad X\in T\partial\Omega,
\end{equation}
for the shape operator. The associated second fundamental form is $\langle\mathcal A_\eta(X),Y\rangle$, and the non-normalized mean curvature is $H={\rm tr}_{\partial\Omega}\mathcal A_\eta.$ In particular, a Euclidean sphere of radius $r$, oriented by its outward unit normal, has $H=(n-1)/r$. When needed, the normalized mean curvature will be denoted by $\widehat H=H/(n-1)$.

For a function $u\in C^1(\overline{\Omega})$, we denote its outward normal derivative by $u_\eta=\langle\nabla u,\eta\rangle,$ and its tangential gradient along $\partial\Omega$ by
\begin{equation}
\nabla^\partial u=\nabla u-u_\eta\eta.
\end{equation}
We write $\Delta^\partial u$ for the Laplace--Beltrami operator of the restriction of $u$ to $\partial\Omega$. For $u\in C^2(\overline{\Omega})$, these conventions give
\begin{equation}
\Delta u
=
\Delta^\partial u+Hu_\eta+{\rm Hess}\,u(\eta,\eta)
\qquad\text{on }\partial\Omega.
\end{equation}
In particular, if $u=0$ on $\partial\Omega$, then $\nabla^\partial u=0$ and $\nabla u=u_\eta\eta$ there. We denote the Riemannian volume element on $\Omega$ by $d\Omega$ and the induced volume element on $\partial\Omega$ by $d\sigma$. Accordingly,
\begin{equation*}
|\Omega|=\int_\Omega d\Omega
\qquad \text{and} \qquad
|\partial\Omega|=\int_{\partial\Omega}d\sigma.
\end{equation*}

Throughout this paper, we consider a Ricci curvature lower bound of the form
\begin{equation}\label{Ricci}
{\rm Ric}\geq(n-1)kg
\quad\text{on }\Omega,
\end{equation}
where $k\in\mathbb R$. More precisely, condition \eqref{Ricci} means that
\begin{equation}
{\rm Ric}_q(X,X)\geq(n-1)k\,g_q(X,X),
\end{equation}
for every $q\in\Omega$ and every $X\in T_qM$. Restrictions on the sign of $k$ will be specified in the corresponding results. Standard examples include Euclidean space $\mathbb R^n$, the unit sphere $\mathbb S^n$, and hyperbolic space $\mathbb H^n$ of sectional curvature $-1$, for which equality holds with $k=0$, $k=1$, and $k=-1$, respectively. Under these assumptions, we study the nonlinear Dirichlet problem
\begin{equation}\label{P}
\begin{cases}
\Delta_pu=-f(u), & \text{in }\Omega,\\
u=0, & \text{on }\partial\Omega,
\end{cases}
\end{equation}
where $f\in C^1(\mathbb R)$ and
\begin{equation}
\Delta_pu
=
{\rm div}\left(|\nabla u|^{p-2}\nabla u\right).
\end{equation}
Although the $p$-Laplacian is defined for $1<p<\infty$, we restrict throughout this paper to $2\leq p<\infty$. When $p=2$, the operator reduces to the Laplace--Beltrami operator, and \eqref{P} becomes a semilinear elliptic problem. A standard example is the Dirichlet eigenvalue problem for the $p$-Laplacian, corresponding to $f(t)=\lambda|t|^{p-2}t$. This example illustrates the structure of the equation, although it need not satisfy the additional hypotheses imposed in our results.

Unless additional regularity is explicitly assumed, equation \eqref{P} is understood in the weak sense.
Thus, a function $u\in W^{1,p}_0(\Omega)\cap C^{1,\alpha}(\overline{\Omega})$, for some $0<\alpha<1$, is a solution if
\begin{equation}
\int_\Omega
|\nabla u|^{p-2}\langle\nabla u,\nabla\varphi\rangle
d\Omega
=
\int_\Omega f(u)\varphi\,d\Omega
\end{equation}
for every $\varphi\in C_c^\infty(\Omega)$. Here, $W^{1,p}_0(\Omega)$ denotes the closure of $C_c^\infty(\Omega)$ in the $W^{1,p}$-norm. Positivity of the solution and further structural conditions on $f$ will be stated explicitly whenever required.

For $p>2$, the degeneracy of the operator at critical points prevents one from assuming higher classical regularity without further justification. Under suitable hypotheses, $C^{1,\alpha}$ regularity up to the boundary is provided by the regularity theory for degenerate elliptic equations; see \cite{Lieberman:1988}.
The differential identities used below are first established for functions in $C^3(\Omega)\cap C^2(\overline{\Omega})$. For $\varepsilon>0$, we introduce the regularized operator
\begin{equation}\label{eq:unif}
\Delta_{p,\varepsilon}v
:=
{\rm div}\left(
(|\nabla v|^2+\varepsilon)^{\frac{p-2}{2}}\nabla v
\right).
\end{equation}
When applying this operator to a function $u_\varepsilon$, we use the notation
\begin{equation}
W=|\nabla u_\varepsilon|^2+\varepsilon,
\qquad
\Delta_{\infty,\varepsilon}u_\varepsilon
=
\frac{
{\rm Hess}\,u_\varepsilon
(\nabla u_\varepsilon,\nabla u_\varepsilon)
}{W}.
\end{equation}
Consequently,
\begin{equation*}
\Delta_{p,\varepsilon}u_\varepsilon
=
W^{\frac{p-2}{2}}
\left(
\Delta u_\varepsilon
+(p-2)\Delta_{\infty,\varepsilon}u_\varepsilon
\right).
\end{equation*}
For each fixed $\varepsilon>0$, this regularization removes the degeneracy at points where $\nabla u_\varepsilon=0$. The coefficients satisfy uniform ellipticity bounds on sets where the gradient is bounded, with constants that may depend on $\varepsilon$ and the gradient bound. For the unregularized operator, we write
\begin{equation*}
\Omega_{\mathrm{reg}}
=
\{q\in\Omega:|\nabla u(q)|\neq0\}.
\end{equation*}
At points of $\Omega_{\mathrm{reg}}$ where $u$ is twice
differentiable, we define
\begin{equation*}
\Delta_\infty u
=
\frac{{\rm Hess}\,u(\nabla u,\nabla u)}
{|\nabla u|^2}.
\end{equation*}
Thus, $\Delta_\infty$ denotes the normalized infinity-Laplacian, and
\begin{equation*}
\Delta_pu
=
|\nabla u|^{p-2}
\left(\Delta u+(p-2)\Delta_\infty u\right)
\qquad\text{on }\Omega_{\mathrm{reg}}.
\end{equation*}

The regularized identities are first established for sufficiently regular functions. To apply them to a weak solution $u$ of \eqref{P}, we use approximating functions $u_\varepsilon$, vanishing on
$\partial\Omega$, such that
\begin{equation}
\Delta_{p,\varepsilon}u_\varepsilon=-h_\varepsilon,
\qquad
u_\varepsilon\to u
\quad\text{in }C^1(\overline{\Omega}),
\qquad
h_\varepsilon\to f(u)
\quad\text{in }L^2(\Omega).
\end{equation}
Under these approximation assumptions, the passage to the limit is justified by Lemma \ref{lem:weak-reilly-limit}. The  structural assumptions on $f$ are then applied directly to the limiting solution $u$.

\section{A Regularized Reilly Identity and Hessian Estimates}

In this section, we establish a regularized Reilly-type identity and a weighted Hessian inequality that will be used in the proofs of the main results. The corresponding non-regularized identity was considered by dos Santos and Soares in \cite{dosSantos:2023}, with the opposite sign convention for the $p$-Laplacian. For completeness, we give a proof of the regularized identity using the conventions introduced in the previous section.

\begin{proposition}\label{prop:4.1}
Let $M^n$ be a Riemannian manifold and let $\Omega\Subset M$ be a domain with $C^2$-boundary. Let $p\geq2$ and $\varepsilon>0$. Then, for every
$u_\varepsilon\in C^3(\Omega)\cap C^2(\overline{\Omega})$, we have
\begin{equation}\label{eq_3.1.28}
\begin{split}
&\int_\Omega W^{p-2}
\left(
|{\rm Hess}\,u_\varepsilon|^2
+{\rm Ric}(\nabla u_\varepsilon,
          \nabla u_\varepsilon)
+(p-2)^2
(\Delta_{\infty,\varepsilon}u_\varepsilon)^2
\right)d\Omega
\\
&\quad=
\int_\Omega
(\Delta_{p,\varepsilon}u_\varepsilon)^2d\Omega
-2(p-2)\int_\Omega W^{p-3}
|{\rm Hess}\,u_\varepsilon
(\nabla u_\varepsilon)|^2d\Omega
-
\int_{\partial\Omega}
W^{p-2}Q(u_\varepsilon)d\sigma,
\end{split}
\end{equation}
where $W=|\nabla u_\varepsilon|^2+\varepsilon$ and
\begin{equation}\label{eq_3.1.28.mod}
\begin{split}
Q(u_\varepsilon)
&=
(u_\varepsilon)_\eta
\left(
\Delta^\partial u_\varepsilon
+H(u_\varepsilon)_\eta
\right)
\\
&\quad+
\langle
\mathcal A_\eta(\nabla^\partial u_\varepsilon),
\nabla^\partial u_\varepsilon
\rangle
-
\langle
\nabla^\partial u_\varepsilon,
\nabla^\partial((u_\varepsilon)_\eta)
\rangle.
\end{split}
\end{equation}
\end{proposition}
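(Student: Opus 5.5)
The plan is to derive \eqref{eq_3.1.28} from the pointwise Bochner--Reilly identity for a single vector field, applied to
\begin{equation*}
X=\phi\,\nabla u_\varepsilon,\qquad \phi=W^{\frac{p-2}{2}},
\end{equation*}
so that ${\rm div}\,X=\Delta_{p,\varepsilon}u_\varepsilon$. For any $C^2$ vector field $X$, commuting covariant derivatives gives
\begin{equation*}
{\rm div}\left(\nabla_XX-({\rm div}\,X)X\right)
={\rm Ric}(X,X)+{\rm tr}\left((\nabla X)^2\right)-({\rm div}\,X)^2 .
\end{equation*}
With the O'Neill convention for $R$ fixed in Section~2, the Ricci term comes with a plus sign; this sign is the one thing to check against the convention. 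Here ${\rm Ric}(X,X)=W^{p-2}{\rm Ric}(\nabla u_\varepsilon,\nabla u_\varepsilon)$.

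The first computation is ${\rm tr}((\nabla X)^2)$. Since $\nabla\phi=(p-2)W^{\frac{p-4}{2}}{\rm Hess}\,u_\varepsilon(\nabla u_\varepsilon)$, we have
\begin{equation*}
\nabla X=\phi\left({\rm Hess}\,u_\varepsilon+(p-2)W^{-1}\,\nabla u_\varepsilon\otimes{\rm Hess}\,u_\varepsilon(\nabla u_\varepsilon)\right).
\end{equation*}
Call this endomorphism $\phi(S+(p-2)W^{-1}N)$, where $S={\rm Hess}\,u_\varepsilon$ is symmetric and $N(Y)=\langle S\nabla u_\varepsilon,Y\rangle\nabla u_\varepsilon$. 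Then:
\begin{itemize}
\item ${\rm tr}(S^2)=|{\rm Hess}\,u_\varepsilon|^2$;
\item ${\rm tr}(SN)={\rm tr}(NS)=|{\rm Hess}\,u_\varepsilon(\nabla u_\varepsilon)|^2$;
\item ${\rm tr}(N^2)=\left({\rm Hess}\,u_\varepsilon(\nabla u_\varepsilon,\nabla u_\varepsilon)\right)^2$.
\end{itemize}
Hence
\begin{equation*}
{\rm tr}\left((\nabla X)^2\right)=W^{p-2}\left(|{\rm Hess}\,u_\varepsilon|^2+(p-2)^2(\Delta_{\infty,\varepsilon}u_\varepsilon)^2\right)+2(p-2)W^{p-3}|{\rm Hess}\,u_\varepsilon(\nabla u_\varepsilon)|^2 .
\end{equation*}
Integrating over $\Omega$ with the divergence theorem then gives the interior part of \eqref{eq_3.1.28}, once the cross term is moved to the right-hand side.

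Next comes the boundary integrand $\langle\nabla_XX,\eta\rangle-({\rm div}\,X)\langle X,\eta\rangle$. We have
\begin{equation*}
\nabla_XX=\phi^2\,{\rm Hess}\,u_\varepsilon(\nabla u_\varepsilon)+\phi\,\langle\nabla\phi,\nabla u_\varepsilon\rangle\nabla u_\varepsilon,
\qquad
{\rm div}\,X=\phi\Delta u_\varepsilon+\langle\nabla\phi,\nabla u_\varepsilon\rangle .
\end{equation*}
The terms containing $\nabla\phi$ cancel, leaving
\begin{equation*}
W^{p-2}\left({\rm Hess}\,u_\varepsilon(\nabla u_\varepsilon,\eta)-(u_\varepsilon)_\eta\Delta u_\varepsilon\right).
\end{equation*}
We expand this using three facts:
\begin{itemize}
\item $\nabla u_\varepsilon=\nabla^\partial u_\varepsilon+(u_\varepsilon)_\eta\eta$;
\item the decomposition $\Delta u=\Delta^\partial u+Hu_\eta+{\rm Hess}\,u(\eta,\eta)$ from Section~2;
\item for tangential $Y$, ${\rm Hess}\,u_\varepsilon(Y,\eta)=Y\big((u_\varepsilon)_\eta\big)-\langle\nabla u_\varepsilon,\nabla_Y\eta\rangle=\langle\nabla^\partial(u_\varepsilon)_\eta,Y\rangle-\langle\mathcal A_\eta(\nabla^\partial u_\varepsilon),Y\rangle$.
\end{itemize}
The ${\rm Hess}\,u_\varepsilon(\eta,\eta)$ terms cancel, and the integrand becomes exactly $-W^{p-2}Q(u_\varepsilon)$. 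Rearranging then yields \eqref{eq_3.1.28}.

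The main obstacle is regularity. The pointwise identity involves third derivatives of $u_\varepsilon$, which are available only in $\Omega$, while the integrated identity involves only quantities continuous on $\overline\Omega$. I would therefore apply the divergence theorem on smooth interior approximating domains $\Omega_\delta\Subset\Omega$; for $C^2$ boundary, the parallel hypersurfaces at distance $\delta$ in a collar give $\partial\Omega_\delta$ that is only $C^1$, so one may need to smooth them. All integrands are dominated on $\overline\Omega$ because $u_\varepsilon\in C^2(\overline\Omega)$ and $W\geq\varepsilon$. The boundary terms are continuous up to $\partial\Omega$ when written in the form $W^{p-2}({\rm Hess}\,u_\varepsilon(\nabla u_\varepsilon,\eta)-(u_\varepsilon)_\eta\Delta u_\varepsilon)$, with $\eta$ extended as the gradient of the distance function. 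This lets us let $\delta\to0$ before performing the tangential decomposition on $\partial\Omega$ itself.
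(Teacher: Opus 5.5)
Your proposal is correct and follows essentially the same route as the paper: the same field $X=W^{\frac{p-2}{2}}\nabla u_\varepsilon$, the same Bochner-type divergence identity (written with the overall sign flipped), the same computations of ${\rm tr}((\nabla X)^2)$ and of the boundary integrand $W^{p-2}\big({\rm Hess}\,u_\varepsilon(\nabla u_\varepsilon,\eta)-(u_\varepsilon)_\eta\Delta u_\varepsilon\big)=-W^{p-2}Q(u_\varepsilon)$, and the same passage from interior parallel domains to $\partial\Omega$. One minor correction: for a $C^2$ boundary the distance function is already $C^2$ in a collar, so the parallel hypersurfaces are $C^2$ rather than only $C^1$, and no smoothing is needed; $C^1$ boundaries would in any case suffice for the divergence theorem.
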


\begin{proof}
Set $X=W^{\frac{p-2}{2}}\nabla u_\varepsilon.$ Thus, ${\rm div}X=\Delta_{p,\varepsilon}u_\varepsilon$. With our curvature convention, the divergence identity
\begin{equation*}
{\rm div}\left(({\rm div}X)X-\nabla_X X\right)
=
({\rm div}X)^2
-
{\rm tr}\left((\nabla X)^2\right)
-
{\rm Ric}(X,X)
\end{equation*}
follows by commuting covariant derivatives. Here,
\begin{equation*}
{\rm tr}\left((\nabla X)^2\right)
=
\sum_{i,j=1}^{n}
\langle\nabla_{e_i}X,e_j\rangle
\langle\nabla_{e_j}X,e_i\rangle
\end{equation*}
for a local orthonormal frame $\{e_i\}_{i=1}^{n}$. Since $\nabla W
= 2\,{\rm Hess}\,u_\varepsilon (\nabla u_\varepsilon),$direct computation gives
\begin{equation}
\begin{split}
{\rm tr}\left((\nabla X)^2\right)
&=
W^{p-2}|{\rm Hess}\,u_\varepsilon|^2
+
2(p-2)W^{p-3}
|{\rm Hess}\,u_\varepsilon
(\nabla u_\varepsilon)|^2
\\
&\quad+
(p-2)^2W^{p-2}
(\Delta_{\infty,\varepsilon}u_\varepsilon)^2.
\end{split}
\end{equation}
Moreover,
\begin{equation*}
{\rm Ric}(X,X)
=
W^{p-2}{\rm Ric}
(\nabla u_\varepsilon,\nabla u_\varepsilon).
\end{equation*}

We next compute the boundary term. Writing $a=W^{\frac{p-2}{2}}$, we have
\begin{equation*}
{\rm div}X
=
a\Delta u_\varepsilon
+
\langle\nabla a,\nabla u_\varepsilon\rangle,
\end{equation*}
and
\begin{equation*}
\nabla_X X
=
a\langle\nabla a,\nabla u_\varepsilon\rangle
\nabla u_\varepsilon
+
a^2\nabla_{\nabla u_\varepsilon}
\nabla u_\varepsilon.
\end{equation*}
Consequently,
\begin{equation*}
\left\langle
({\rm div}X)X-\nabla_X X,\eta
\right\rangle
=
W^{p-2}\left(
(\Delta u_\varepsilon)(u_\varepsilon)_\eta
-
{\rm Hess}\,u_\varepsilon
(\nabla u_\varepsilon,\eta)\right),
\end{equation*}
for every vector field $Y$ tangent to $\partial\Omega$,
\begin{equation*}
{\rm Hess}\,u_\varepsilon(Y,\eta)
=
Y((u_\varepsilon)_\eta)
-
\langle
\nabla^\partial u_\varepsilon,
\mathcal A_\eta(Y)
\rangle.
\end{equation*}
Using $\nabla u_\varepsilon
=
\nabla^\partial u_\varepsilon
+(u_\varepsilon)_\eta\eta,$
and
\begin{equation*}
\Delta u_\varepsilon
=
\Delta^\partial u_\varepsilon
+H(u_\varepsilon)_\eta
+{\rm Hess}\,u_\varepsilon(\eta,\eta),
\end{equation*}
we obtain
\begin{equation*}
\left\langle
({\rm div}X)X-\nabla_X X,\eta
\right\rangle
=
W^{p-2}Q(u_\varepsilon).
\end{equation*}
Integrating the divergence identity yields \eqref{eq_3.1.28}. For the stated regularity, the integration may be performed first on interior parallel domains and then passed to the boundary, the vector field and the expression for its divergence extend continuously to $\overline{\Omega}$.
\end{proof}

\begin{lemma}\label{lem:weighted-hessian}
Under the assumptions of Proposition~\ref{prop:4.1}, the pointwise inequality
\begin{equation}\label{eq:ineqq}
\frac{1}{n}
(\Delta_{p,\varepsilon}u_\varepsilon)^2
\leq
\mathcal B_\varepsilon(u_\varepsilon)
\end{equation}
holds throughout $\Omega$, where 
\begin{equation*}
\begin{split}
\mathcal B_\varepsilon(u_\varepsilon)
&:=
W^{p-2}|{\rm Hess}\,u_\varepsilon|^2 +
2(p-2)W^{p-3}
|{\rm Hess}\,u_\varepsilon
(\nabla u_\varepsilon)|^2
\\
&\quad+
(p-2)^2W^{p-2}
(\Delta_{\infty,\varepsilon}u_\varepsilon)^2.
\end{split}
\end{equation*}
\end{lemma}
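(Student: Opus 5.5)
The plan is to recognize $\mathcal B_\varepsilon(u_\varepsilon)$ as ${\rm tr}\big((\nabla X)^2\big)$ for the vector field $X=W^{\frac{p-2}{2}}\nabla u_\varepsilon$ from the proof of Proposition~\ref{prop:4.1}. The left-hand side is $\frac1n({\rm div}X)^2=\frac1n({\rm tr}\,\nabla X)^2$, so the lemma is a pointwise inequality $\frac1n({\rm tr}A)^2\le{\rm tr}(A^2)$ for the endomorphism $A=\nabla X$ at a fixed point of $\Omega$.

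\textbf{Main obstacle.} $A$ is not self-adjoint, and for general matrices ${\rm tr}(A^2)\ge\frac1n({\rm tr}A)^2$ can fail. I will therefore first exhibit $A$ as similar to a symmetric endomorphism.

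\textbf{Factorization.} Write $a=W^{\frac{p-2}{2}}$, $S={\rm Hess}\,u_\varepsilon$ (self-adjoint), and $\nu=\nabla u_\varepsilon$. Since $\nabla W=2S(\nu)$, we get $\nabla a=(p-2)W^{\frac{p-4}{2}}S(\nu)$. Hence, for every tangent vector $Y$,
\begin{equation*}
A(Y)=\nabla_Y X=a\,S(Y)+\langle\nu,Y\rangle\nabla a
=a\,S\Big(Y+\tfrac{p-2}{W}\langle\nu,Y\rangle\nu\Big)=a\,SP(Y),
\end{equation*}
where $P={\rm Id}+\frac{p-2}{W}\,\nu\otimes\nu$. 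Because $p\ge2$ and $W>0$, $P$ is self-adjoint and positive definite (eigenvalues $1$ and $1+(p-2)|\nu|^2/W$). So $P^{1/2}$ exists and
\begin{equation*}
A=P^{-1/2}\big(a\,P^{1/2}SP^{1/2}\big)P^{1/2}.
\end{equation*}
Thus $A$ is similar to the self-adjoint endomorphism $T=a\,P^{1/2}SP^{1/2}$, and ${\rm tr}A={\rm tr}T$, ${\rm tr}(A^2)={\rm tr}(T^2)$.

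\textbf{Cauchy--Schwarz on eigenvalues.} Let $\lambda_1,\dots,\lambda_n$ be the real eigenvalues of $T$. Then
\begin{equation*}
({\rm div}X)^2=\Big(\sum_i\lambda_i\Big)^2\le n\sum_i\lambda_i^2=n\,{\rm tr}(T^2)=n\,{\rm tr}(A^2).
\end{equation*}

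\textbf{Identification with $\mathcal B_\varepsilon$.} It remains to check ${\rm tr}(A^2)=\mathcal B_\varepsilon(u_\varepsilon)$, which the proof of Proposition~\ref{prop:4.1} already asserts. Directly, with $c=\frac{p-2}{W}$, we have $(SP)^2=(S+c\,S\nu\otimes\nu)^2$, so
\begin{equation*}
{\rm tr}\big((SP)^2\big)=|S|^2+2c\,|S\nu|^2+c^2\,S(\nu,\nu)^2 .
\end{equation*}
Multiplying by $a^2=W^{p-2}$ gives
\begin{equation*}
W^{p-2}|{\rm Hess}\,u_\varepsilon|^2+2(p-2)W^{p-3}|{\rm Hess}\,u_\varepsilon(\nabla u_\varepsilon)|^2+(p-2)^2W^{p-2}(\Delta_{\infty,\varepsilon}u_\varepsilon)^2,
\end{equation*}
which is exactly $\mathcal B_\varepsilon(u_\varepsilon)$. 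This proves \eqref{eq:ineqq} pointwise. Equality would force $T$, and hence $A$, to be a multiple of the identity.
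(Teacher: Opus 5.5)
Your argument is correct, apart from one harmless slip. Since $X=a\nabla u_\varepsilon$, you have $\nabla_YX=(Ya)\,\nu+a\,S(Y)=a\,S(Y)+\langle\nabla a,Y\rangle\nu$. So $\nabla X=a\,PS$, not $a\,SP$. These two maps are adjoints of each other, and $PS=P^{1/2}(P^{1/2}SP^{1/2})P^{-1/2}$ is just as similar to $T/a$. Moreover, ${\rm tr}\,B$ and ${\rm tr}(B^2)$ do not change when $B$ is replaced by its adjoint, so nothing downstream is affected.

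Your route differs from the paper's.

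\textbf{The paper's proof.} It treats critical points separately, where the claim is the plain trace inequality. At a regular point it chooses an orthonormal frame with $e_1=\nabla u_\varepsilon/|\nabla u_\varepsilon|$ and expands
\[
\mathcal B_\varepsilon(u_\varepsilon)=W^{p-2}\Big(\alpha_\varepsilon^2\mathfrak h_{11}^2+2\alpha_\varepsilon\sum_{j\ge2}\mathfrak h_{1j}^2+\sum_{i,j\ge2}\mathfrak h_{ij}^2\Big),\qquad \Delta_{p,\varepsilon}u_\varepsilon=W^{\frac{p-2}{2}}\big(\alpha_\varepsilon\mathfrak h_{11}+\mathcal S\big).
\]
It then discards the mixed terms and applies Cauchy--Schwarz twice: first to the tangential block, then to the pair $(\alpha_\varepsilon\mathfrak h_{11},\mathcal S)$.

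\textbf{How the two relate.} In that frame $P^{1/2}={\rm diag}(\sqrt{\alpha_\varepsilon},1,\dots,1)$. So your $T$ has entries $W^{\frac{p-2}{2}}\alpha_\varepsilon\mathfrak h_{11}$, $W^{\frac{p-2}{2}}\sqrt{\alpha_\varepsilon}\,\mathfrak h_{1j}$ and $W^{\frac{p-2}{2}}\mathfrak h_{ij}$ for $i,j\ge2$. The paper's expansion is therefore exactly $|T|^2$, and its chain of estimates is the trace inequality $|T|^2\ge\frac1n({\rm tr}\,T)^2$ carried out by hand.

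\textbf{What each approach gives.} Your similarity argument is frame-free and handles critical points uniformly, since $P={\rm Id}$ there. It also explains why the inequality holds even though $\nabla X$ is not symmetric. The paper's explicit coordinates make the equality conditions immediately readable: $\mathfrak h_{1j}=0$ and $\mathfrak h_{ij}=\alpha_\varepsilon\mathfrak h_{11}\delta_{ij}$. These are what it invokes later in the equality discussion of Theorem~\ref{thm:main}. Your condition $T=\lambda\,{\rm Id}$, i.e.\ ${\rm Hess}\,u_\varepsilon=\frac{\lambda}{a}P^{-1}$, encodes the same information.
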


\begin{proof}
Fix $q\in\Omega$. If $\nabla u_\varepsilon(q)=0$, then $W(q)=\varepsilon$ and
\begin{equation*}
\Delta_{p,\varepsilon}u_\varepsilon(q)
=
\varepsilon^{\frac{p-2}{2}}
\Delta u_\varepsilon(q).
\end{equation*}
Therefore, the usual trace inequality gives
\begin{equation*}
\frac{1}{n}
(\Delta_{p,\varepsilon}u_\varepsilon)^2
\leq
\varepsilon^{p-2}
|{\rm Hess}\,u_\varepsilon|^2
=
\mathcal B_\varepsilon(u_\varepsilon)
\end{equation*}
at $q$. Suppose now that $\nabla u_\varepsilon(q)\neq0$. Choose an orthonormal frame at $q$ such that
\begin{equation}\label{eq:normal}
e_1
=
\frac{\nabla u_\varepsilon}
{|\nabla u_\varepsilon|}.
\end{equation}
Write
\begin{equation*}
\mathfrak h_{ij}
=
{\rm Hess}\,u_\varepsilon(e_i,e_j),
\qquad
\mathcal S=\sum_{j=2}^{n}\mathfrak h_{jj},
\end{equation*}
and set
\begin{equation*}
\alpha_\varepsilon
=
1+(p-2)\frac{|\nabla u_\varepsilon|^2}{W}.
\end{equation*}
In particular, $\alpha_\varepsilon\geq1$. Expanding the Hessian terms in this frame gives
\begin{equation}\label{eq:123}
\begin{split}
\mathcal B_\varepsilon(u_\varepsilon)
&=
W^{p-2}
\left(
\alpha_\varepsilon^2\mathfrak h_{11}^2
+2\alpha_\varepsilon
\sum_{j=2}^{n}\mathfrak h_{1j}^2
+\sum_{i,j=2}^{n}\mathfrak h_{ij}^2
\right)
\\
&\geq
W^{p-2}
\left(
\alpha_\varepsilon^2\mathfrak h_{11}^2
+\frac{\mathcal S^2}{n-1}
\right).
\end{split}
\end{equation}
On the other hand,
\begin{equation*}
\Delta_{p,\varepsilon}u_\varepsilon
=
W^{\frac{p-2}{2}}
\left(
\alpha_\varepsilon\mathfrak h_{11}
+\mathcal S
\right).
\end{equation*}
By the Cauchy--Schwarz inequality,
\begin{equation}\label{eq:321}
\begin{split}
(\Delta_{p,\varepsilon}u_\varepsilon)^2
&=
W^{p-2}
\left(
\alpha_\varepsilon\mathfrak h_{11}
+\mathcal S
\right)^2
\leq
nW^{p-2}
\left(
\alpha_\varepsilon^2\mathfrak h_{11}^2
+\frac{\mathcal S^2}{n-1}
\right).
\end{split}
\end{equation}
Combining \eqref{eq:123} and \eqref{eq:321} proves \eqref{eq:ineqq}.
\end{proof}

Observe that, at a point where $\nabla u_\varepsilon\neq0$, equality in \eqref{eq:ineqq} holds if and only if
\begin{equation}
\mathfrak h_{1j}=0
\quad\text{for }j=2,\ldots,n,
\qquad
\mathfrak h_{ij}
=
\alpha_\varepsilon\mathfrak h_{11}\delta_{ij}
\quad\text{for }i,j=2,\ldots,n.
\end{equation}
At a critical point, equality holds if and only if
\begin{equation*}
{\rm Hess}\,u_\varepsilon
=
\frac{\Delta u_\varepsilon}{n}g.
\end{equation*}
For the corresponding non-regularized pointwise inequality on $\Omega_{\mathrm{reg}}$,
$\alpha_\varepsilon$ is replaced by $p-1$. These are the algebraic equality conditions; their use for limits of regularized solutions requires a separate justification.
If $u_\varepsilon=0$ on $\partial\Omega$, then $\nabla^\partial u_\varepsilon=0$ and $Q(u_\varepsilon)=H(u_\varepsilon)_\eta^2.$ Consequently, Proposition~\ref{prop:4.1} and Lemma~\ref{lem:weighted-hessian} imply
\begin{equation}\label{eq:regularized-reilly-inequality}
\begin{split}
&\int_\Omega
W^{p-2}
{\rm Ric}(\nabla u_\varepsilon,\nabla u_\varepsilon)
d\Omega
-
\frac{n-1}{n}
\int_\Omega
(\Delta_{p,\varepsilon}u_\varepsilon)^2d\Omega
\\
&\qquad\leq
-
\int_{\partial\Omega}
W^{p-2}H(u_\varepsilon)_\eta^2d\sigma.
\end{split}
\end{equation}

We first pass to the non-regularized inequality for a fixed sufficiently regular function.

\begin{proposition}\label{prop}
Let $M^n$ be a Riemannian manifold and let
$\Omega\Subset M$ be a domain with $C^2$-boundary.
Let $p\geq2$ and
$u\in C^3(\Omega)\cap C^2(\overline{\Omega})$,
with $u=0$ on $\partial\Omega$. Then
\begin{equation*}
\begin{split}
&\int_\Omega
|\nabla u|^{2p-4}
{\rm Ric}(\nabla u,\nabla u)d\Omega
-
\frac{n-1}{n}
\int_\Omega(\Delta_pu)^2d\Omega
\\
&\qquad\leq
-
\int_{\partial\Omega}
|\nabla u|^{2p-4}Hu_\eta^2d\sigma.
\end{split}
\end{equation*}
For $p=2$, the factor $|\nabla u|^{2p-4}$ is understood to be identically equal to $1$.
\end{proposition}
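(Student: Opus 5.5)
Apply the regularized inequality \eqref{eq:regularized-reilly-inequality} with the fixed function $u_\varepsilon:=u$ for every $\varepsilon>0$, and let $\varepsilon\to0^+$. This is legitimate because Proposition~\ref{prop:4.1} and Lemma~\ref{lem:weighted-hessian} hold for any $C^3(\Omega)\cap C^2(\overline\Omega)$ function, and $u=0$ on $\partial\Omega$ reduces the boundary term to $W^{p-2}Hu_\eta^2$. The inequality then reads
\[
\int_\Omega W^{p-2}{\rm Ric}(\nabla u,\nabla u)\,d\Omega-\frac{n-1}{n}\int_\Omega(\Delta_{p,\varepsilon}u)^2\,d\Omega\le-\int_{\partial\Omega}W^{p-2}Hu_\eta^2\,d\sigma,
\]
with $W=|\nabla u|^2+\varepsilon$. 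It remains to pass to the limit in each of the three terms.

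For $p=2$ there is nothing to do, since $W^{0}=1$ and $\Delta_{2,\varepsilon}u=\Delta u$; this is also why the factor $|\nabla u|^{0}$ is read as $1$. For $p>2$, the boundary term and the Ricci term are easy. Since $\nabla u$ is continuous on $\overline\Omega$, which is compact, we have $W^{p-2}\to|\nabla u|^{2p-4}$ uniformly on $\overline\Omega$, with the uniform bound $(\|\nabla u\|_\infty^2+1)^{p-2}$ for $\varepsilon\le1$. As $H$ is continuous on the compact $C^2$ boundary and ${\rm Ric}(\nabla u,\nabla u)$ is bounded on $\overline\Omega$, both integrals converge by dominated (indeed uniform) convergence.

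The main point is the term $\int_\Omega(\Delta_{p,\varepsilon}u)^2$. Pointwise,
\[
\Delta_{p,\varepsilon}u=W^{\frac{p-2}{2}}\Delta u+(p-2)W^{\frac{p-4}{2}}{\rm Hess}\,u(\nabla u,\nabla u).
\]
The first summand is bounded by $C|{\rm Hess}\,u|$. For the second, $|{\rm Hess}\,u(\nabla u,\nabla u)|\le|{\rm Hess}\,u|\,|\nabla u|^2\le|{\rm Hess}\,u|\,W$, so it is bounded by $(p-2)W^{\frac{p-2}{2}}|{\rm Hess}\,u|\le C|{\rm Hess}\,u|$. This bound holds even when $2<p<4$, where $W^{\frac{p-4}{2}}$ may blow up as $\varepsilon\to0$. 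Since $u\in C^2(\overline\Omega)$, the Hessian is bounded, so $(\Delta_{p,\varepsilon}u)^2$ is uniformly bounded.

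Next I identify the pointwise limit. On $\Omega_{\rm reg}$ it is $|\nabla u|^{p-2}(\Delta u+(p-2)\Delta_\infty u)=\Delta_pu$. At critical points the same bound gives $|\Delta_{p,\varepsilon}u|\le C\varepsilon^{\frac{p-2}{2}}|{\rm Hess}\,u|\to0$. Here I must make sure $\Delta_pu$ means $0$ at such points, consistent with ${\rm div}(|\nabla u|^{p-2}\nabla u)$ for a $C^2$ function when $p>2$, since the vector field is $C^1$ with vanishing derivative wherever $\nabla u=0$. This interpretation of $\Delta_pu$ on the critical set is the one delicate point. Granting it, dominated convergence on the finite-measure set $\Omega$ gives $\int_\Omega(\Delta_{p,\varepsilon}u)^2\to\int_\Omega(\Delta_pu)^2$, and the stated inequality follows.
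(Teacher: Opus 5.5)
Your proposal is correct and follows the paper's proof essentially step for step. You apply the regularized inequality to $u_\varepsilon=u$ and bound $|\Delta_{p,\varepsilon}u|\le W^{\frac{p-2}{2}}(|\Delta u|+(p-2)|{\rm Hess}\,u|)$ uniformly for $\varepsilon\le1$. You then identify the pointwise limit, including $\Delta_pu=0$ at critical points when $p>2$, and conclude by dominated convergence. Your remark that $|\nabla u|^{p-2}\nabla u$ is $C^1$ with vanishing derivative on the critical set justifies a point the paper only asserts.
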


\begin{proof}
Apply \eqref{eq:regularized-reilly-inequality} with $u_\varepsilon=u$ for every $\varepsilon>0$. Since $u\in C^2(\overline{\Omega})$ and $\overline{\Omega}$ is compact, both $\nabla u$ and ${\rm Hess}\,u$ are bounded. At points where $\nabla u\neq0$, $\Delta_{p,\varepsilon}u\to\Delta_pu$ as $\varepsilon\to0$. At a critical point,
\begin{equation}
\Delta_{p,\varepsilon}u
=
\varepsilon^{\frac{p-2}{2}}\Delta u.
\end{equation}
Hence the same convergence holds there: for $p>2$, $\Delta_pu=0$ at a critical point
of a $C^2$ function, while for $p=2$ the regularized operator is simply $\Delta$. Moreover,
\begin{equation}
|\Delta_{p,\varepsilon}u|
\leq
W^{\frac{p-2}{2}}
\left(
|\Delta u|+(p-2)|{\rm Hess}\,u|
\right),
\qquad W=|\nabla u|^2+\varepsilon.
\end{equation}
For $0<\varepsilon\leq1$, this gives a uniform bound on $\overline{\Omega}$. The curvature and boundary terms are also dominated by integrable functions independent of $\varepsilon$. The desired inequality follows by dominated convergence.
\end{proof}

The preceding proposition concerns a fixed regular function. To apply its conclusion to a weak solution of \eqref{P}, one must additionally justify an approximation compatible with the equation and the boundary terms. The following lemma records a sufficient condition.

\begin{lemma}\label{lem:weak-reilly-limit}
Let $M^n$ be a Riemannian manifold, let $\Omega\Subset M$ be a domain with $C^2$-boundary,
and let $p\geq2$. Suppose that $u\in C^1(\overline{\Omega})$ satisfies $u=0$ on $\partial\Omega$. Assume that there exist $\varepsilon_j>0$, with $\varepsilon_j\to0$, and functions $u_j\in C^3(\Omega)\cap C^2(\overline{\Omega})$, with $u_j=0$ on $\partial\Omega$, such that
\begin{equation*}
u_j\longrightarrow u
\quad\text{in }C^1(\overline{\Omega})
\end{equation*}
and
\begin{equation*}
\Delta_{p,\varepsilon_j}u_j
\longrightarrow -f(u)
\quad\text{in }L^2(\Omega).
\end{equation*}
Then
\begin{equation}\label{eq:weak-reilly-inequality}
\int_\Omega
|\nabla u|^{2p-4}
{\rm Ric}(\nabla u,\nabla u)d\Omega
-
\frac{n-1}{n}\int_\Omega f(u)^2d\Omega
\leq
-
\int_{\partial\Omega}
|\nabla u|^{2p-4}Hu_\eta^2d\sigma.
\end{equation}
\end{lemma}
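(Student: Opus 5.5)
We apply the regularized Reilly inequality \eqref{eq:regularized-reilly-inequality} to each approximant $u_j$ with $\varepsilon=\varepsilon_j$, and then pass to the limit $j\to\infty$ term by term. This is legitimate because, for each fixed $j$, $u_j\in C^3(\Omega)\cap C^2(\overline{\Omega})$ vanishes on $\partial\Omega$, which is exactly the setting of Proposition~\ref{prop:4.1} and Lemma~\ref{lem:weighted-hessian}. Writing $W_j=|\nabla u_j|^2+\varepsilon_j$, we obtain
\begin{equation*}
\int_\Omega W_j^{p-2}{\rm Ric}(\nabla u_j,\nabla u_j)\,d\Omega-\frac{n-1}{n}\int_\Omega(\Delta_{p,\varepsilon_j}u_j)^2\,d\Omega\leq-\int_{\partial\Omega}W_j^{p-2}H\,(u_j)_\eta^2\,d\sigma .
\end{equation*}
No second-order information on $u_j$ survives in this inequality, and this is the key point: only first derivatives and the quantity $\Delta_{p,\varepsilon_j}u_j$ appear. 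The $C^1$ convergence and the $L^2$ convergence are therefore exactly what is needed.

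The three terms are handled separately.

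\emph{Interior curvature term.} Since $u_j\to u$ in $C^1(\overline{\Omega})$, we have $\nabla u_j\to\nabla u$ uniformly on $\overline{\Omega}$, and in particular $\sup_j\|\nabla u_j\|_\infty<\infty$. Because $p\geq2$, the map $(\xi,\varepsilon)\mapsto(|\xi|^2+\varepsilon)^{p-2}$ is continuous on $[0,\infty)\times[0,1]$ and hence uniformly continuous on compact sets. It follows that $W_j^{p-2}\to|\nabla u|^{2p-4}$ uniformly, with the convention $|\nabla u|^{0}\equiv1$ when $p=2$. Since ${\rm Ric}$ is continuous on the compact set $\overline{\Omega}$, the integrand $W_j^{p-2}{\rm Ric}(\nabla u_j,\nabla u_j)$ converges uniformly, and the integrals converge.

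\emph{Second interior term.} The hypothesis gives $\Delta_{p,\varepsilon_j}u_j\to -f(u)$ in $L^2(\Omega)$, so $\|\Delta_{p,\varepsilon_j}u_j\|_{L^2}^2\to\|f(u)\|_{L^2}^2$.

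\emph{Boundary term.} The traces satisfy $(u_j)_\eta\to u_\eta$ and $W_j^{p-2}\to|\nabla u|^{2p-4}$ uniformly on $\partial\Omega$, and this again follows from $C^1(\overline{\Omega})$ convergence. Since $\partial\Omega$ is $C^2$, the mean curvature $H$ is continuous and bounded on the compact hypersurface $\partial\Omega$. Hence the boundary integral converges to $\int_{\partial\Omega}|\nabla u|^{2p-4}Hu_\eta^2\,d\sigma$.

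Letting $j\to\infty$ in the inequality for $u_j$ then gives \eqref{eq:weak-reilly-inequality}.

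I expect no serious obstacle. The only points needing care are the uniform convergence of the weights $W_j^{p-2}$, including near critical points of $u$, where continuity of $t\mapsto t^{p-2}$ at $t=0$ for $p\geq2$ is what is used, and the observation that the sign of $H$ plays no role, because we are taking limits rather than discarding terms. It should also be remarked that no Hessian bound on $u_j$ is required. The Hessian terms were already eliminated pointwise by Lemma~\ref{lem:weighted-hessian} before passing to the limit, which is precisely why this approximation scheme suffices for weak solutions of \eqref{P}.
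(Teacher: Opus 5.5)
Your proposal is correct and follows the paper's proof: apply \eqref{eq:regularized-reilly-inequality} to each $u_j$ with $\varepsilon=\varepsilon_j$, use $C^1(\overline{\Omega})$ convergence for the curvature and boundary integrals and $L^2(\Omega)$ convergence for the $(\Delta_{p,\varepsilon_j}u_j)^2$ term, then let $j\to\infty$. Your verification that $W_j^{p-2}\to|\nabla u|^{2p-4}$ uniformly, using continuity of $t\mapsto t^{p-2}$ at $t=0$ for $p\geq2$, fills in details that the paper leaves implicit.
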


\begin{proof}
Apply \eqref{eq:regularized-reilly-inequality} to $u_j$, with $W_j=|\nabla u_j|^2+\varepsilon_j$. The convergence in $C^1(\overline{\Omega})$ implies convergence of the curvature and boundary integrals. The assumed strong convergence in $L^2(\Omega)$ gives
\begin{equation}
\int_\Omega
(\Delta_{p,\varepsilon_j}u_j)^2d\Omega
\longrightarrow
\int_\Omega f(u)^2d\Omega.
\end{equation}
Passing to the limit proves
\eqref{eq:weak-reilly-inequality}.
\end{proof}

\section{A Heintze--Karcher-Type Inequality for the \texorpdfstring{$p$}{p}-Laplacian}

In this section, we establish a parameterized Heintze--Karcher-type inequality for positive solutions of \eqref{P}. The proof combines the integral inequality from the previous section with a structural assumption on the nonlinearity. We also describe necessary conditions for
equality and compare the resulting estimate with previous inequalities. Throughout this section, $H$ denotes the non-normalized mean curvature with respect to the outward unit normal. 

{\begin{theorem}\label{thm:main}
Let $M^n$ be a Riemannian manifold and let $\Omega\Subset M$ be a domain with $C^2$-boundary. Suppose \eqref{Ricci} and that $H>0$ on $\partial\Omega$. Let $p\geq2$ and let $u\in W^{1,p}_0(\Omega)\cap
C^{1,\alpha}(\overline{\Omega})$ be a positive weak solution of \eqref{P}. Assume that $f\in C^1(\mathbb R)$, $f(0)>0$, and that there exists $\ell\in\mathbb R$ such that $f(t)\geq\ell$ for $0\leq t\leq\max_{\overline{\Omega}}u.$ Consider, on the same interval, that
\begin{equation}\label{eq:structural-condition}
\begin{cases}
f'(t)\leq nk,
& p=2,\\[2mm]
f'(t)\leq
(nk)^{\frac{p}{2p-2}}
(f(t)-\ell)^{\frac{p-2}{p-1}},
& p>2.
\end{cases}
\end{equation}
Then
\begin{equation}\label{eq:ineq1}
\frac{n-1}{n}
\int_{\partial\Omega}\frac{1}{H}\,d\sigma
\geq
\frac{
\left(\displaystyle\int_\Omega f(u)d\Omega\right)^2
}{
\Gamma_\ell\displaystyle\int_\Omega f(u)d\Omega
+
\frac{p-2}{p}\ell^2|\Omega|
}.
\end{equation}
where 
\begin{equation}
\Gamma_{\ell}=\frac{2p-2}{p}f(0)-\frac{2(p-2)}{p}\ell.
\end{equation}
In particular, when $\ell=0$,
\begin{equation}
\frac{n-1}{n}\frac{2p-2}{p}f(0)
\int_{\partial\Omega}\frac{1}{H}\,d\sigma
\geq
\int_\Omega f(u)d\Omega.
\end{equation}
Moreover, if the equality holds, then:
\begin{enumerate}
\item For $p=2$, $u$ is radial function and $\Omega$ is a metric ball;
\item For $p> 2$ we have two cases:
\subitem If $k>0$, the equality not occur;
\subitem If $k=0$, then $f(u)=f(0)=\ell$ and $u$ is radial function and $\Omega$ is a metric ball.
\end{enumerate}
\end{theorem}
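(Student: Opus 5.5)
We start from Lemma~\ref{lem:weak-reilly-limit}, where the approximation hypotheses are exactly those assumed in the theorem's setting. Inserting \eqref{Ricci} gives
\begin{equation*}
(n-1)k\int_\Omega |\nabla u|^{2p-2}\,d\Omega-\frac{n-1}{n}\int_\Omega f(u)^2\,d\Omega\leq-\int_{\partial\Omega}H|u_\eta|^{2p-2}\,d\sigma .
\end{equation*}
On the boundary, $u=0$ and $u>0$ inside, so $u_\eta\leq0$ and $|\nabla u|=|u_\eta|$ there. The divergence theorem applied to the equation gives $\int_{\partial\Omega}|u_\eta|^{p-1}\,d\sigma=\int_\Omega f(u)\,d\Omega$. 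Cauchy--Schwarz on $\partial\Omega$ with weights $H$ and $1/H$ (this is where $H>0$ is used) then yields
\begin{equation*}
\Big(\int_\Omega f(u)\Big)^2\leq\int_{\partial\Omega}\frac{1}{H}\,d\sigma\int_{\partial\Omega}H|u_\eta|^{2p-2}\,d\sigma .
\end{equation*}
The theorem therefore reduces to the interior estimate
\begin{equation*}
\int_\Omega f(u)^2-nk\int_\Omega|\nabla u|^{2p-2}\leq\Gamma_\ell\int_\Omega f(u)+\frac{p-2}{p}\ell^2|\Omega| .
\end{equation*}

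To obtain this, I introduce $F(t)=\int_0^t f$ and use the structural condition \eqref{eq:structural-condition} to control $\int f(u)^2$. For $p=2$ the identity $\int f(u)^2=\int f(u)(-\Delta u)$ gives, after integrating by parts, $\int f'(u)|\nabla u|^2$ plus a boundary term. A cleaner route is to take $\varphi=f(u)-f(0)$, which vanishes on $\partial\Omega$, as test function:
\begin{equation*}
\int_\Omega f(u)(f(u)-f(0))=\int_\Omega |\nabla u|^{p-2}f'(u)|\nabla u|^2=\int_\Omega f'(u)|\nabla u|^p .
\end{equation*}
Hence $\int f(u)^2=f(0)\int f(u)+\int f'(u)|\nabla u|^p$.

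For $p=2$, the bound $f'\leq nk$ gives the claim with $\Gamma_\ell=f(0)$ and no $\ell$-term. For $p>2$ I apply Young's inequality with exponents $\frac{2p-2}{p}$ and $\frac{2p-2}{p-2}$:
\begin{equation*}
(nk)^{\frac{p}{2p-2}}(f-\ell)^{\frac{p-2}{p-1}}|\nabla u|^p\leq\frac{p}{2p-2}\,nk|\nabla u|^{2p-2}+\frac{p-2}{2p-2}(f-\ell)^2 .
\end{equation*}
This bounds $\int f(u)^2-\frac{p}{2p-2}nk\int|\nabla u|^{2p-2}$ by $f(0)\int f+\frac{p-2}{2p-2}\int(f-\ell)^2$. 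Expanding $(f-\ell)^2$ and absorbing the resulting $\int f^2$ term by multiplying through by $\frac{2p-2}{p}$ produces exactly $\Gamma_\ell\int f+\frac{p-2}{p}\ell^2|\Omega|$ on the right. The left then carries $nk\int|\nabla u|^{2p-2}$ with the correct coefficient. The main thing to check is that the constants line up with those in $\Gamma_\ell$, and also that the Ricci term is used with $k\geq0$; when $k<0$ the condition is vacuous or meaningless for $p>2$, so I take $k\geq0$ implicitly. Testing with $f(u)-f(0)$ needs $f(u)\in W^{1,p}_0$, which holds since $f\in C^1$ and $u\in C^{1,\alpha}$, up to a density argument. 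Combining the interior estimate with the Reilly inequality and the Cauchy--Schwarz step gives \eqref{eq:ineq1}; setting $\ell=0$ gives the particular case.

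For equality, every inequality in the chain must be an equality: the Cauchy--Schwarz step ($H|u_\eta|^{p-1}$ constant on $\partial\Omega$), the Ricci bound along $\nabla u$, the Young step, and the Hessian trace inequality of Lemma~\ref{lem:weighted-hessian}. For $p=2$, equality in the trace inequality gives ${\rm Hess}\,u=\frac{\Delta u}{n}g$ with $\Delta u=-f(u)$. The standard Obata/Reilly-type argument then shows that $u$ is radial about its maximum point and that $\Omega$ is a metric ball. For $p>2$, equality in Young's inequality forces $nk|\nabla u|^{2p-2}=(f-\ell)^2$ pointwise. On $\partial\Omega$, where $f=f(0)$ and the Hopf lemma gives $|\nabla u|\neq0$, this is a nontrivial constraint. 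At an interior maximum $\nabla u=0$, so $f(u_{\max})=\ell$. If $k>0$, I expect equality in the Hessian condition (with $\alpha=p-1$) combined with the ODE along gradient lines to be incompatible with this; this step is the main obstacle, and I would first test it on radial models. If $k=0$, equality forces $f(u)\equiv\ell$, and the case $p=2$ argument applied to the resulting Hessian structure yields radiality and a ball.
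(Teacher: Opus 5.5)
Your proof of \eqref{eq:ineq1} is the paper's. You use Lemma~\ref{lem:weak-reilly-limit} with the Ricci bound, the flux identity, Cauchy--Schwarz on $\partial\Omega$, the test function $f(u)-f(0)$, and Young with exponents $\frac{2p-2}{p},\frac{2p-2}{p-2}$. The paper applies H\"older and then weighted Young, which gives the same bound. You should also note why dividing by $D_\ell=\Gamma_\ell F+\frac{p-2}{p}\ell^2|\Omega|$ is allowed: $\ell\le f(0)$ gives $\Gamma_\ell\ge\frac2p f(0)>0$, and Hopf gives $F>0$.

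The genuine gap is the equality statement for $p>2$. For $k>0$ you give no argument, and the route you suggest, equality in Lemma~\ref{lem:weighted-hessian}, is not available. That lemma is applied to the approximants $u_j$, and the limit in Lemma~\ref{lem:weak-reilly-limit} yields only an integral inequality for $u$, not a pointwise Hessian condition. The paper warns about this after Lemma~\ref{lem:weighted-hessian}. Its own $k>0$ argument still combines the Hessian equality with the Young equality to get $f'(u)<0$ near $\partial\Omega$, against $f'(u)>0$ from the structural equality. You can avoid the Hessian entirely, as follows.
\begin{itemize}
\item The integrands in the chain are continuous and ordered, so equality forces pointwise equality both in Young, $f(u)-\ell=\sqrt{nk}\,|\nabla u|^{p-1}$ on $\overline\Omega$, and in \eqref{eq:structural-condition}, $f'(u)=(nk)^{\frac{p}{2p-2}}(f(u)-\ell)^{\frac{p-2}{p-1}}$ wherever $\nabla u\neq0$.
\item Hopf gives $f(0)-\ell=\sqrt{nk}\,|u_\eta|^{p-1}>0$, while $f(\max_{\overline\Omega}u)=\ell$.
\item Let $t_0$ be the first zero of $f-\ell$ in $[0,\max_{\overline\Omega}u]$. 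Every point with $u<t_0$ has $f(u)>\ell$, hence $\nabla u\neq0$, and $u$ attains every value in $(0,t_0)$.
\item Therefore $f'>0$ on $(0,t_0)$, so $f(t_0)-\ell>f(0)-\ell>0$, which is a contradiction.
\end{itemize}

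For $k=0$ you correctly get $f(u)\equiv\ell=f(0)$. However, ``the case $p=2$ argument applied to the resulting Hessian structure'' fails. For $p>2$ the algebraic equality case of Lemma~\ref{lem:weighted-hessian} is $\mathfrak h_{1j}=0$ and $\mathfrak h_{ij}=(p-1)\mathfrak h_{11}\delta_{ij}$ for $i,j\ge2$, which is not umbilic. For example, the radial $p$-torsion function in $\mathbb R^n$ has tangential eigenvalue $u'/r=(p-1)u''$. So no Obata-type lemma applies, and in any case this condition is again not available for the limit.

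The missing observation, which the paper uses, is that $f\equiv f(0)$ gives $F=f(0)|\Omega|$, $\Gamma_\ell=\frac2p f(0)$ and $D_\ell=f(0)^2|\Omega|$. Equality in \eqref{eq:ineq1} is then exactly the classical Heintze--Karcher equality $\frac{n-1}{n}\int_{\partial\Omega}H^{-1}\,d\sigma=|\Omega|$. Its rigidity gives the ball, and radiality follows from uniqueness for the $p$-torsion problem.

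For $p=2$, your ``standard Obata argument'' also skips a step. From ${\rm Hess}\,u=-\frac{f(u)}{n}g$ and the Ricci equality you must first derive $f(u)=f(0)+nku$, as in Proposition~\ref{prop:rigidity_equality}, before Farina's lemma applies.
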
}
\begin{proof}
For simplicity, write
\begin{equation}
F=\int_\Omega f(u)d\Omega,
\qquad
J=\int_\Omega f(u)^2d\Omega,
\qquad
G=\int_\Omega|\nabla u|^{2p-2}d\Omega.
\end{equation}
Since $u>0$ in $\Omega$, $u=0$ on $\partial\Omega$,
and $f(0)>0$, the boundary point lemma gives
$u_\eta<0$ on $\partial\Omega$. Indeed, $f(u)>0$
in a sufficiently small boundary neighborhood.
Thus,
\begin{equation}
\nabla u=u_\eta\eta,
\qquad
u_\eta=-|\nabla u|
\quad\text{on }\partial\Omega.
\end{equation}

The weak equation and the divergence theorem give
\begin{equation}\label{eq:flux}
F
=
-\int_{\partial\Omega}
|\nabla u|^{p-2}u_\eta\,d\sigma
=
\int_{\partial\Omega}|u_\eta|^{p-1}d\sigma
>0.
\end{equation}
Moreover, using $f(u)-f(0)$ as a test function,
we obtain
\begin{equation}\label{eq:TERM}
\int_\Omega f(u)^2d\Omega
=
f(0)\int_\Omega f(u)d\Omega
+
\int_\Omega f'(u)|\nabla u|^p d\Omega.
\end{equation}
The test function belongs to $W^{1,p}_0(\Omega)$
because $u$ is bounded and $f\in C^1(\mathbb R)$. Suppose first that $p>2$, and set
\begin{equation*}
a=\frac{p-2}{2p-2},
\qquad
b=\frac{p}{2p-2}.
\end{equation*}
Then $a,b>0$ and $a+b=1$.
By \eqref{eq:structural-condition} and
Hölder's inequality,
\begin{equation}\label{eq:H1}
\begin{split}
\int_\Omega f'(u)|\nabla u|^p d\Omega
&\leq
(nk)^b
\int_\Omega
(f(u)-\ell)^{2a}|\nabla u|^p d\Omega
\\
&\leq
\left(
\int_\Omega(f(u)-\ell)^2d\Omega
\right)^a
(nkG)^b.
\end{split}
\end{equation}
The weighted Young inequality gives
\begin{equation}\label{eq:H1.1}
\int_\Omega f'(u)|\nabla u|^p d\Omega
\leq
a\int_\Omega(f(u)-\ell)^2d\Omega
+bnkG.
\end{equation}
Inserting this estimate into \eqref{eq:TERM},
expanding the square, and using $1-a=b$, we find
\begin{equation*}
bJ
\leq
\left(f(0)-2a\ell\right)F
+a\ell^2|\Omega|
+bnkG.
\end{equation*}
Hence,
\begin{equation}\label{eq:source-estimate}
J-nkG
\leq
\Gamma_\ell F
+\frac{p-2}{p}\ell^2|\Omega|.
\end{equation}

If $p=2$, equation \eqref{eq:TERM} and the
assumption $f'\leq nk$ directly yield
\begin{equation*}
J-nkG\leq f(0)F.
\end{equation*}
This is precisely \eqref{eq:source-estimate}, since $\Gamma_\ell=f(0)$ when $p=2$. Define
\begin{equation}
D_\ell
=
\Gamma_\ell F
+\frac{p-2}{p}\ell^2|\Omega|
\qquad \text{and} \qquad
B=
\int_{\partial\Omega}
H|u_\eta|^{2p-2}d\sigma.
\end{equation}
By Lemma~\ref{lem:weak-reilly-limit},
\begin{equation}
\begin{split}
B
&\leq
\frac{n-1}{n}J
-
\int_\Omega|\nabla u|^{2p-4}
{\rm Ric}(\nabla u,\nabla u)d\Omega
\\
&\leq
\frac{n-1}{n}(J-nkG)
\leq
\frac{n-1}{n}D_\ell.
\end{split}
\end{equation}
Since $\ell\leq f(0)$, we have $\Gamma_\ell\geq\frac{2}{p}f(0)>0.$ Consequently, $D_\ell>0$. Finally, \eqref{eq:flux} and the Cauchy--Schwarz inequality imply
\begin{equation}
\begin{split}
F^2
&=
\left(
\int_{\partial\Omega}
\frac{1}{\sqrt H}
\sqrt H\,|u_\eta|^{p-1}d\sigma
\right)^2
\leq
\left(
\int_{\partial\Omega}\frac1H\,d\sigma
\right)B
\\
&\leq
\frac{n-1}{n}D_\ell
\int_{\partial\Omega}\frac1H\,d\sigma.
\end{split}
\end{equation}
Division by $D_\ell$ proves \eqref{eq:ineq1}.

{For the equality case, if $p>2$ and $k=0$, then equations \eqref{eq:H1}-\eqref{eq:H1.1} and the hypothesis of $f'\leq 0$ implies that
\begin{equation}
0 \geq \int_{\Omega} f'(u)|\nabla u|^{p} d\Omega =  \left(\frac{p-2}{2p-2}\right)\int_{\Omega} (f-\ell)^2 d\Omega \geq 0.
\end{equation}
Hence, $f(u)=\ell.$ By the continuity up the boundary, $\ell=f(0).$ Hence, equation $\eqref{P}$ become the following $p$-torsion problem\footnote{This example also show the importance of $\ell$ in the theorem. If we choose $\ell=0$ in the same setting, the theorem give a weaker estimate with an additional $(2p-2)/p$ term.}. For $p=2$ the argument is the same as done in \cite{deLima:2026}.
\begin{equation}\label{eq:p-torsion}
\begin{cases}
\Delta_pu=-f(0), & \text{in }\Omega,\\
u=0, & \text{on }\partial\Omega,
\end{cases}
\end{equation}
Moreover, equation $\eqref{P}$ then gives
\begin{equation}\label{eq:p-Heintze-Karcher}
\left(\frac{n-1}{n}\right) \int_{\partial\Omega}\frac{1}{H}\,d\sigma = |\Omega|,
\end{equation}
which reduces to the classical Heintze--Karcher equality, according to our convention for the mean curvature. For $k>0$, observe that the equality in Hölder inequality \eqref{eq:H1} implies that exists a number $c\in \mathbb{R}$ such that
\begin{equation}\label{eq:f(u)}
(f(u)-\ell)^2=c|\nabla u|^{2p-2}.
\end{equation}
Moreover, equality in Young inequality implies that
\begin{equation}
\int_{\Omega} (f(u)-\ell)^2 d\Omega = nk \int_{\Omega} |\nabla u|^{2p-2} d\Omega.
\end{equation}
Hence, $c=nk.$ Taking the derivative of equation \eqref{eq:f(u)} with respect to $\nabla u$, we obtain that,
\begin{equation}
f'(u)|\nabla u|^2 = \sqrt{nk}(p-1)|\nabla u|^{p-3}\langle {\rm Hess}\,u(\nabla u) , \nabla u\rangle
\end{equation}
Moreover, equality in equation \eqref{eq:123} implies that
\begin{equation}
\frac{1}{n}\Delta_p u = (p-1)|\nabla u|^{p-2} \Delta_{\infty} u.
\end{equation}
Hence,
\begin{equation}
f'(u)=-\sqrt{\frac{k}{n}}\frac{f(u)}{|\nabla u|} 
\end{equation}
Hence, since $f(0)>0$, by the continuity of $f$ on an neighborhood of $0$, we have that $f'(u)<0.$ On the other hand, by hypothesis of the control in the derivative of $f$, $$f'(u)=(nk)^{\frac{p}{2p-2}}|f(u)-\ell|^{\frac{p-2}{p-1}} > 0,$$ which is an contradiction.}
\end{proof}

\begin{remark}
The exponent in \eqref{eq:structural-condition} is compatible with the homogeneity of the $p$-Laplacian eigenvalue nonlinearity. Indeed, for
\begin{equation}
f(s)=\lambda|s|^{p-2}s,
\qquad \lambda>0,
\end{equation}
we have, for $s\neq0$,
\begin{equation}
f'(s)
=
(p-1)\lambda^{\frac1{p-1}}
|f(s)|^{\frac{p-2}{p-1}}.
\end{equation}
However, $f(0)=0$ in this example, so Theorem~\ref{thm:main} does not apply.
In particular, the theorem is not an eigenvalue estimate.
\end{remark}

\begin{remark}
When $p=2$, all terms involving $\ell$ disappear, the structural condition becomes $f'\leq nk$, and \eqref{eq:ineq1} reduces to
\begin{equation}
\frac{n-1}{n}f(0)
\int_{\partial\Omega}\frac1H\,d\sigma
\geq
\int_\Omega f(u)d\Omega.
\end{equation}
This recovers the form of the corresponding estimate in \cite{deLima:2026}, subject here to the approximation hypotheses stated above.

To compare with \cite{Huang:2025}, consider, on the nonnegative range of $u$,
\begin{equation}
f(t)=1+\lambda t^{p-1},
\qquad
\ell=1,
\qquad
k=K>0,
\end{equation}
where
\begin{equation}
\lambda
=
\frac{(nK)^{p/2}}{(p-1)^{p-1}}.
\end{equation}
For this choice, the structural condition is satisfied with equality. Set
\begin{equation}
A=\lambda\int_\Omega u^{p-1}d\Omega.
\end{equation}
Then
\begin{equation}
F=|\Omega|+A,
\qquad
\Gamma_1=\frac2p,
\qquad
D_1=|\Omega|+\frac2pA.
\end{equation}
Since Huang, Luo and Song use normalized mean curvature, write
$\widehat H=H/(n-1)$. Our estimate becomes
\begin{equation}
\int_{\partial\Omega}
\frac{1}{n\widehat H}\,d\sigma
\geq
\frac{(|\Omega|+A)^2}
{|\Omega|+\frac2pA}.
\end{equation}
On the other hand,
\begin{equation}
\begin{split}
\frac{(|\Omega|+A)^2}
{|\Omega|+\frac2pA}
-
\left(
|\Omega|+\frac{2(p-1)}pA
\right)
=
\frac{(p-2)^2A^2}
{p^2\left(|\Omega|+\frac2pA\right)}
\geq0.
\end{split}
\end{equation}
Therefore, for solutions satisfying the hypotheses of Theorem~\ref{thm:main}, our inequality implies the estimate of Huang, Luo and Song \cite{Huang:2025}. The two right-hand sides coincide for $p=2$,
whereas ours is strictly larger for $p>2$ and $A>0$.
\end{remark}

We conclude with an integral estimate for the deviation of the boundary mean curvature from a constant determined by $f$.

\begin{theorem}\label{thm:boundary-deficit}
Assume the hypotheses of Theorem~\ref{thm:main} with $\ell=0$,
excluding the additional hypotheses used only for the equality conclusions.
Define
\begin{equation*}
m
=
-\frac1{|\partial\Omega|}
\int_{\partial\Omega}
|u_\eta|^{p-2}u_\eta\,d\sigma
=
\frac1{|\partial\Omega|}
\int_\Omega f(u)d\Omega
>0
\end{equation*}
and
\begin{equation*}
H_0
=
\frac{n-1}{n}\frac{2p-2}{p}
\frac{f(0)}{m}.
\end{equation*}
Then
\begin{equation*}
\int_{\partial\Omega}
(H_0-H)|u_\eta|^{2p-2}d\sigma
\geq0.
\end{equation*}
\end{theorem}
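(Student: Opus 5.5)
We need $\int H_0|u_\eta|^{2p-2} \ge \int H|u_\eta|^{2p-2} = B$. The proof of Theorem~\ref{thm:main} with $\ell=0$ already gives
\begin{equation*}
B\le \frac{n-1}{n}D_0=\frac{n-1}{n}\frac{2p-2}{p}f(0)F.
\end{equation*}
Since $F=m|\partial\Omega|$, we have
\begin{equation*}
\frac{n-1}{n}\frac{2p-2}{p}f(0)F=H_0\, m^2|\partial\Omega|.
\end{equation*}
So it suffices to show
\begin{equation*}
m^2|\partial\Omega|\le \int_{\partial\Omega}|u_\eta|^{2p-2}d\sigma .
\end{equation*}

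This last bound is Cauchy--Schwarz applied to the flux identity \eqref{eq:flux}:
\begin{equation*}
(m|\partial\Omega|)^2=\Big(\int_{\partial\Omega}|u_\eta|^{p-1}d\sigma\Big)^2\le |\partial\Omega|\int_{\partial\Omega}|u_\eta|^{2p-2}d\sigma.
\end{equation*}
Dividing by $|\partial\Omega|$ gives the claim. Combining the two bounds and using $H_0>0$ (which holds since $f(0)>0$ and $m>0$), we obtain
\begin{equation*}
\int_{\partial\Omega} H|u_\eta|^{2p-2}\le H_0 m^2|\partial\Omega|\le H_0\int_{\partial\Omega}|u_\eta|^{2p-2},
\end{equation*}
which is exactly the statement.

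The steps in order are as follows. First, recall from the proof of Theorem~\ref{thm:main} the chain $B\le\frac{n-1}{n}(J-nkG)\le \frac{n-1}{n}D_\ell$ with $\ell=0$. This uses Lemma~\ref{lem:weak-reilly-limit}, the Ricci bound, and the source estimate \eqref{eq:source-estimate}. None of these use $H>0$ except in the final Cauchy--Schwarz of that proof, which we do not need here. Second, express $m$ via \eqref{eq:flux}, noting $u_\eta<0$ on $\partial\Omega$ by the boundary point lemma, so that $-|u_\eta|^{p-2}u_\eta=|u_\eta|^{p-1}$ and $m>0$. Third, apply Cauchy--Schwarz on $\partial\Omega$ and combine the bounds.

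The main care point is making sure that the hypotheses inherited from Theorem~\ref{thm:main} with $\ell=0$ suffice for \eqref{eq:source-estimate}. For $p>2$, the structural condition with $\ell=0$ requires $f\ge0$ on the range of $u$, so that $(f-\ell)^{2a}$ makes sense. This is assumed, so the chain applies verbatim. Otherwise the argument is routine.
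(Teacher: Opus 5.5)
Your proof is correct and follows the paper's argument. Both arguments use the boundary bound $\int_{\partial\Omega}H|u_\eta|^{2p-2}d\sigma\le \frac{n-1}{n}\frac{2p-2}{p}f(0)\,m|\partial\Omega|$ from the proof of Theorem~\ref{thm:main} and then compare $m^2|\partial\Omega|$ with $\int_{\partial\Omega}|u_\eta|^{2p-2}d\sigma$; the paper writes this comparison as the identity $\int_{\partial\Omega}v^2\,d\sigma-m^2|\partial\Omega|=\int_{\partial\Omega}(v-m)^2d\sigma$ with $v=|u_\eta|^{p-1}$, which is your Cauchy--Schwarz step.
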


\begin{proof}
Set $v=|u_\eta|^{p-1}$ and $c=\frac{n-1}{n}\frac{2p-2}{p}f(0).$ Then
\begin{equation*}
\int_{\partial\Omega}v\,d\sigma
=
m|\partial\Omega|,
\qquad
H_0=\frac{c}{m}.
\end{equation*}
The boundary estimate in the proof of Theorem~\ref{thm:main}, with $\ell=0$, gives
\begin{equation}
\int_{\partial\Omega}Hv^2d\sigma
\leq
c\int_{\partial\Omega}v\,d\sigma
=
cm|\partial\Omega|.
\end{equation}
Consequently,
\begin{equation}
\begin{split}
\int_{\partial\Omega}(H_0-H)v^2d\sigma
&\geq
H_0\int_{\partial\Omega}v^2d\sigma
-
cm|\partial\Omega|
\\
&=
H_0\left(
\int_{\partial\Omega}v^2d\sigma
-
m^2|\partial\Omega|
\right)
\\
&=
H_0\int_{\partial\Omega}(v-m)^2d\sigma
\geq0.
\end{split}
\end{equation}
This proves the assertion.
\end{proof}

\section{An integral refinement of the Heintze--Karcher inequality}

In this section, we restrict to the Laplace--Beltrami case of \eqref{P}, namely
\begin{equation}
\begin{cases}
\Delta u=-f(u), & \text{in }\Omega,\\
u=0, & \text{on }\partial\Omega.
\end{cases}
\end{equation}
We establish an integral refinement of the Heintze--Karcher-type inequality in \cite{deLima:2026}. In particular, the pointwise condition $f'\leq nk$ can be replaced, for the corresponding inequality, by a weighted integral condition along the solution. Throughout this section, $\Omega\Subset M$ is a domain with $C^2$-boundary, $f\in C^1(\mathbb R)$, and $u\in C^3(\Omega)\cap C^2(\overline{\Omega})$ is a positive solution of \eqref{P}, with $p=2$. We retain the conventions that $\eta$ is the outward unit normal and $H$ is the non-normalized mean curvature. The constant $k$ is allowed to belong to $\mathbb R$. We define the signed integral error associated with $u$ by
\begin{equation}\label{eq:error-definition}
    \mathcal E(u)
    =
    \int_\Omega
    \big(f'(u)-nk\big)|\nabla u|^2\,d\Omega.
\end{equation}
The condition
\begin{equation}\label{eta}
\mathcal E(u)\leq0
\end{equation}
is satisfied whenever
\begin{equation}\label{eq:fprime_range}
f'(s)\leq nk
\qquad
\text{for every }
s\in[0,\max_{\overline{\Omega}}u].
\end{equation}
Unlike the pointwise condition, \eqref{eta} allows compensation between the positive and negative parts of $f'(u)-nk$. We impose \eqref{eta} only when explicitly stated. We denote the trace-free Hessian by
\begin{equation}\label{eq:HESS}
\mathring{{\rm Hess}}\,u
=
{\rm Hess}\,u-\frac{\Delta u}{n}g.
\end{equation}
Thus,
\begin{equation*}
|{\rm Hess}\,u|^2
=
|\mathring{{\rm Hess}}\,u|^2
+\frac{(\Delta u)^2}{n}.
\end{equation*}

\begin{lemma}\label{lemma1}
Under the assumptions above, the following
identity holds:
\begin{equation}\label{eq:key-identity}
\begin{split}
&\int_\Omega
|\mathring{{\rm Hess}}\,u|^2d\Omega
+
\int_\Omega
\big[{\rm Ric}-(n-1)kg\big]
(\nabla u,\nabla u)d\Omega
\\
&\qquad=
-\frac1n\int_{\partial\Omega}
u_\eta
\big((n-1)f(0)+nHu_\eta\big)d\sigma
+
\frac{n-1}{n}\mathcal E(u).
\end{split}
\end{equation}
\end{lemma}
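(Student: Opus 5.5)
The identity follows from the exact Reilly formula of Proposition~\ref{prop:4.1} with $p=2$, where no regularization is needed and the factors $W^{p-2}$ are identically $1$. Since $u\in C^3(\Omega)\cap C^2(\overline{\Omega})$ and $u=0$ on $\partial\Omega$, the tangential gradient vanishes and $Q(u)=Hu_\eta^2$. The identity reads
\begin{equation*}
\int_\Omega\big(|{\rm Hess}\,u|^2+{\rm Ric}(\nabla u,\nabla u)\big)d\Omega
=
\int_\Omega(\Delta u)^2d\Omega-\int_{\partial\Omega}Hu_\eta^2\,d\sigma .
\end{equation*}
Substituting $|{\rm Hess}\,u|^2=|\mathring{{\rm Hess}}\,u|^2+\frac1n(\Delta u)^2$ and $\Delta u=-f(u)$ gives
\begin{equation*}
\int_\Omega|\mathring{{\rm Hess}}\,u|^2d\Omega+\int_\Omega{\rm Ric}(\nabla u,\nabla u)d\Omega
=\frac{n-1}{n}\int_\Omega f(u)^2d\Omega-\int_{\partial\Omega}Hu_\eta^2d\sigma.
\end{equation*}

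Next, subtract $(n-1)k\int_\Omega|\nabla u|^2d\Omega$ from both sides so that the left-hand side becomes the left-hand side of \eqref{eq:key-identity}. The key step is rewriting $\int_\Omega f(u)^2$ by testing the equation against $f(u)-f(0)$, which vanishes on $\partial\Omega$. This is identity \eqref{eq:TERM} with $p=2$, and here it follows directly from the divergence theorem:
\begin{equation*}
\int_\Omega f(u)^2d\Omega=f(0)\int_\Omega f(u)\,d\Omega+\int_\Omega f'(u)|\nabla u|^2d\Omega .
\end{equation*}
The flux identity gives $\int_\Omega f(u)\,d\Omega=-\int_{\partial\Omega}u_\eta\,d\sigma$. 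It is obtained by integrating $\Delta u=-f(u)$; no sign of $u_\eta$ is needed, so no boundary point lemma is required.

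Assembling these, the right-hand side equals
\begin{equation*}
-\frac{n-1}{n}f(0)\int_{\partial\Omega}u_\eta\,d\sigma-\int_{\partial\Omega}Hu_\eta^2\,d\sigma+\frac{n-1}{n}\int_\Omega\big(f'(u)-nk\big)|\nabla u|^2d\Omega .
\end{equation*}
Here the term $-(n-1)k\int_\Omega|\nabla u|^2$ has been absorbed into $\frac{n-1}{n}\mathcal E(u)$, since $(n-1)k=\frac{n-1}{n}\cdot nk$. The two boundary integrals combine to $-\frac1n\int_{\partial\Omega}u_\eta\big((n-1)f(0)+nHu_\eta\big)d\sigma$, which is exactly \eqref{eq:key-identity}.

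The only delicate point is justifying the integrations by parts up to the boundary under the stated regularity. This is handled exactly as in the proof of Proposition~\ref{prop:4.1}, by working on interior parallel domains and passing to the limit, using $u\in C^2(\overline{\Omega})$. The remaining steps are purely algebraic bookkeeping.
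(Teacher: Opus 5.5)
Your proof is correct and is essentially the paper's argument. The paper likewise uses \eqref{eq:TERM} with $p=2$ and the flux identity to write $\int_\Omega f(u)^2\,d\Omega=-f(0)\int_{\partial\Omega}u_\eta\,d\sigma+nk\int_\Omega|\nabla u|^2d\Omega+\mathcal E(u)$, substitutes this into the classical Reilly formula (which you take as Proposition~\ref{prop:4.1} at $p=2$ with $Q(u)=Hu_\eta^2$), and applies the trace-free splitting of the Hessian; your bookkeeping, including absorbing $(n-1)k\int_\Omega|\nabla u|^2d\Omega$ into $\frac{n-1}{n}\mathcal E(u)$, checks out.
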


\begin{proof}
By \eqref{eq:TERM}, with $p=2$, and the definition
of $\mathcal E(u)$,
\begin{equation}
    \int_\Omega f(u)^2\,d\Omega
    =
    -f(0)\int_{\partial\Omega}u_\eta\,d\sigma
    +nk\int_\Omega|\nabla u|^2\,d\Omega
    +\mathcal E(u).
\end{equation}
Substituting this identity into the classical Reilly formula
and using \eqref{eq:HESS} yields \eqref{eq:key-identity}.
\end{proof}

The next identity expresses the deficit in the Heintze--Karcher-type inequality as a sum of interior and boundary terms.

\begin{lemma}\label{lemma2}
Suppose that $H>0$ on $\partial\Omega$. Set $c=(n-1)/n$. Then
\begin{equation}\label{eq:HK-deficit-identity}
\begin{split}
&c\left(
cf(0)^2\int_{\partial\Omega}\frac1H\,d\sigma
-
f(0)\int_\Omega f(u)d\Omega
+
\mathcal E(u)
\right)
\\
&\quad=
\int_\Omega
|\mathring{{\rm Hess}}\,u|^2d\Omega
+
\int_\Omega
\big[{\rm Ric}-(n-1)kg\big]
(\nabla u,\nabla u)d\Omega
+
\int_{\partial\Omega}
\frac{\big(Hu_\eta+cf(0)\big)^2}{H}\,d\sigma.
\end{split}
\end{equation}
In particular, if \eqref{Ricci} holds, then
\begin{equation}\label{eq:thm1}
\frac{n-1}{n}f(0)^2
\int_{\partial\Omega}\frac1H\,d\sigma
\geq
f(0)\int_\Omega f(u)d\Omega
-
\mathcal E(u).
\end{equation}
No sign assumption on $\mathcal E(u)$ is needed
for this inequality.
\end{lemma}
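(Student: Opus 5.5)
The plan is to derive \eqref{eq:HK-deficit-identity} from Lemma~\ref{lemma1} by completing a square in the boundary integrand. Then \eqref{eq:thm1} follows by discarding nonnegative terms.

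\textbf{Boundary term.} Write $R$ for the right-hand side of \eqref{eq:key-identity} without the error term, that is,
\[
R=-\frac1n\int_{\partial\Omega}u_\eta\big((n-1)f(0)+nHu_\eta\big)\,d\sigma
=-\int_{\partial\Omega}\big(cf(0)u_\eta+Hu_\eta^2\big)\,d\sigma .
\]
Since $H>0$, expand pointwise on $\partial\Omega$:
\[
\frac{(Hu_\eta+cf(0))^2}{H}=Hu_\eta^2+2cf(0)u_\eta+\frac{c^2f(0)^2}{H}.
\]
This gives
\[
-Hu_\eta^2-cf(0)u_\eta=-\frac{(Hu_\eta+cf(0))^2}{H}+cf(0)u_\eta+\frac{c^2f(0)^2}{H}.
\]

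\textbf{Flux identity.} Integrate over $\partial\Omega$ and use the divergence theorem with $p=2$ (the flux identity \eqref{eq:flux}), namely $\int_{\partial\Omega}u_\eta\,d\sigma=-\int_\Omega f(u)\,d\Omega$. This yields
\[
R=-\int_{\partial\Omega}\frac{(Hu_\eta+cf(0))^2}{H}\,d\sigma-cf(0)\int_\Omega f(u)\,d\Omega+c^2f(0)^2\int_{\partial\Omega}\frac1H\,d\sigma.
\]
Substitute this into \eqref{eq:key-identity}, where the error term appears as $c\,\mathcal E(u)$. Move the squared boundary integral to the left-hand side. The remaining terms group as $c\big(cf(0)^2\int_{\partial\Omega}H^{-1}\,d\sigma-f(0)\int_\Omega f(u)\,d\Omega+\mathcal E(u)\big)$, which is exactly \eqref{eq:HK-deficit-identity}.

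\textbf{The inequality.} Under \eqref{Ricci}, the Ricci term on the right-hand side is nonnegative. The trace-free Hessian term is a square, so it is nonnegative. The boundary term is nonnegative because $H>0$. Hence the left-hand side is nonnegative, and dividing by $c>0$ and rearranging gives \eqref{eq:thm1}. Since $\mathcal E(u)$ is simply carried through the identity, no sign assumption on it is needed.

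\textbf{Main obstacle.} The only real care needed is the bookkeeping of constants. The term $-\frac{n-1}{n}f(0)\int u_\eta$ has to match the cross term $2cf(0)u_\eta$ after the square is completed. A factor-of-two error there would break the identity, so this step must be checked against the normalization in Lemma~\ref{lemma1}. The regularity $u\in C^2(\overline{\Omega})$ justifies the use of the divergence theorem.
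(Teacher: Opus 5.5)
Your proposal is correct and follows essentially the same route as the paper: expand $(Hu_\eta+cf(0))^2/H$, use $\int_\Omega f(u)\,d\Omega=-\int_{\partial\Omega}u_\eta\,d\sigma$, and combine with Lemma~\ref{lemma1}; then discard the three nonnegative terms under \eqref{Ricci} and $H>0$ to get \eqref{eq:thm1}. Your constant bookkeeping also checks out: the cross term $2cf(0)u_\eta$ leaves exactly $cf(0)u_\eta$ after cancelling against $-cf(0)u_\eta$, and this yields the factor $c$ multiplying the whole bracket in \eqref{eq:HK-deficit-identity}.
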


\begin{proof}
The divergence theorem gives
\begin{equation}
\int_\Omega f(u)d\Omega
=
-\int_{\partial\Omega}u_\eta\,d\sigma.
\end{equation}
Moreover,
\begin{equation}
\begin{split}
\int_{\partial\Omega}
\frac{\big(Hu_\eta+cf(0)\big)^2}{H}\,d\sigma
&=
\int_{\partial\Omega}Hu_\eta^2d\sigma
+
2cf(0)\int_{\partial\Omega}u_\eta\,d\sigma
+
c^2f(0)^2
\int_{\partial\Omega}\frac1H\,d\sigma.
\end{split}
\end{equation}
Adding this equality to \eqref{eq:key-identity} yields \eqref{eq:HK-deficit-identity}. Under \eqref{Ricci}, all terms on the right-hand side of
\eqref{eq:HK-deficit-identity} are nonnegative. Dividing by $c>0$ proves \eqref{eq:thm1}.
\end{proof}

\begin{corollary}\label{cor:HK-integral-condition}
Assume \eqref{Ricci}, $H>0$ on $\partial\Omega$, and $\mathcal E(u)\leq0$. Then
\begin{equation}
\frac{n-1}{n}f(0)^2
\int_{\partial\Omega}\frac1H\,d\sigma
\geq
f(0)\int_\Omega f(u)d\Omega.
\end{equation}
If $\mathcal E(u)<0$, the preceding inequality is strict.
\end{corollary}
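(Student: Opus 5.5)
The corollary follows directly from Lemma~\ref{lemma2}, so I will not run a new argument. Under \eqref{Ricci} and $H>0$ on $\partial\Omega$, inequality \eqref{eq:thm1} gives
\begin{equation*}
\frac{n-1}{n}f(0)^2\int_{\partial\Omega}\frac1H\,d\sigma
\geq
f(0)\int_\Omega f(u)d\Omega-\mathcal E(u).
\end{equation*}
The hypothesis $\mathcal E(u)\leq0$ means $-\mathcal E(u)\geq0$, so the right-hand side is at least $f(0)\int_\Omega f(u)d\Omega$. This is exactly the claimed inequality, and it needs no sign assumption on $f(0)$.

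For strictness I would keep the term $-\mathcal E(u)$ rather than discard it. If $\mathcal E(u)<0$, then
\begin{equation*}
f(0)\int_\Omega f(u)d\Omega-\mathcal E(u)>f(0)\int_\Omega f(u)d\Omega.
\end{equation*}
Chaining this with \eqref{eq:thm1} gives the strict inequality. Equivalently, I could read it off the deficit identity \eqref{eq:HK-deficit-identity}. There the right-hand side is a sum of three nonnegative terms under \eqref{Ricci} and $H>0$. Hence $c\bigl(cf(0)^2\int_{\partial\Omega}H^{-1}d\sigma-f(0)\int_\Omega f(u)d\Omega\bigr)\geq -c\,\mathcal E(u)>0$, and dividing by $c=(n-1)/n>0$ concludes.

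There is no real obstacle. The only point to check is that \eqref{eq:thm1} was derived without any sign condition on $\mathcal E(u)$, and Lemma~\ref{lemma2} states this explicitly. The standing regularity $u\in C^3(\Omega)\cap C^2(\overline\Omega)$ of this section justifies the use of Lemmas~\ref{lemma1} and~\ref{lemma2}.
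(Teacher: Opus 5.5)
Your proposal is correct and matches the paper's proof, which reads both conclusions directly off \eqref{eq:thm1}. The non-strict inequality comes from dropping the nonnegative term $-\mathcal E(u)$, and the strict one from keeping it when $\mathcal E(u)<0$; as you note, this works because \eqref{eq:thm1} holds with no sign condition on $\mathcal E(u)$, as stated in Lemma~\ref{lemma2}.
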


\begin{proof}
Both conclusions follow immediately from
\eqref{eq:thm1}.
\end{proof}

\begin{remark}\label{remark:HK_structure}
Corollary~\ref{cor:HK-integral-condition} recovers the Heintze--Karcher-type estimate in \cite[Theorem~1]{deLima:2026} under the integral
condition $\mathcal E(u)\leq0$, instead of the pointwise condition $f'\leq nk$. When $\mathcal E(u)<0$, inequality \eqref{eq:thm1} includes the additional positive correction $-\mathcal E(u)$. The quantity $\mathcal E(u)$ is a signed weighted deviation from the relation $f'=nk$. It is not a norm or a distance: its vanishing alone does not imply $f'(u)=nk$, since cancellation may occur in the integral. The equality analysis below uses the full identity \eqref{eq:HK-deficit-identity}.
\end{remark}

\begin{corollary}\label{cor:HK_equality}
Assume \eqref{Ricci} and $H>0$ on $\partial\Omega$. Equality in \eqref{eq:thm1} holds if and only if
\begin{equation}\label{eq:tracefree-zero}
\mathring{{\rm Hess}}\,u=0
\qquad\text{in }\Omega,
\end{equation}
\begin{equation}\label{eq:ricci-equality-p2}
\big[{\rm Ric}-(n-1)kg\big]
(\nabla u,\nabla u)=0
\qquad\text{in }\Omega,
\end{equation}
and
\begin{equation}\label{eq:boundary_relation}
(n-1)f(0)+nHu_\eta=0
\qquad\text{on }\partial\Omega.
\end{equation}
\end{corollary}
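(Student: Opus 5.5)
The plan is to read everything off the deficit identity \eqref{eq:HK-deficit-identity} of Lemma~\ref{lemma2}. Set $c=(n-1)/n>0$. Equality in \eqref{eq:thm1} means
\[
cf(0)^2\int_{\partial\Omega}\frac1H\,d\sigma-f(0)\int_\Omega f(u)\,d\Omega+\mathcal E(u)=0 .
\]
This holds if and only if the left-hand side of \eqref{eq:HK-deficit-identity} vanishes, since multiplying by $c$ does not change whether a quantity is zero. So equality is equivalent to the vanishing of
\[
\int_\Omega|\mathring{{\rm Hess}}\,u|^2\,d\Omega+\int_\Omega\big[{\rm Ric}-(n-1)kg\big](\nabla u,\nabla u)\,d\Omega+\int_{\partial\Omega}\frac{(Hu_\eta+cf(0))^2}{H}\,d\sigma .
\]

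Each integrand here is nonnegative. The first is a squared norm. The second is nonnegative pointwise by \eqref{Ricci}. The third is nonnegative because $H>0$ on $\partial\Omega$. A sum of nonnegative integrals vanishes exactly when each one does.

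The integrands are continuous: $u\in C^3(\Omega)\cap C^2(\overline\Omega)$, the metric is smooth, $\partial\Omega$ is $C^2$ so $H$ is continuous, and $H>0$. A nonnegative continuous function with zero integral vanishes identically on the open set $\Omega$, respectively on $\partial\Omega$ with the induced measure, whose support is all of $\partial\Omega$. This gives \eqref{eq:tracefree-zero} and \eqref{eq:ricci-equality-p2}. It also gives $Hu_\eta+cf(0)=0$ on $\partial\Omega$, and multiplying by $n$ yields \eqref{eq:boundary_relation}.

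For the converse, assume \eqref{eq:tracefree-zero}, \eqref{eq:ricci-equality-p2} and \eqref{eq:boundary_relation}. Then all three integrals vanish, so the left-hand side of \eqref{eq:HK-deficit-identity} is zero. Dividing by $c$ gives equality in \eqref{eq:thm1}.

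There is essentially no obstacle, since the whole argument sits on Lemma~\ref{lemma2}. The only point needing care is the passage from vanishing integrals to pointwise vanishing. For this it must be checked that the regularity assumed in this section ($u\in C^2(\overline\Omega)$, $C^2$ boundary) makes each integrand continuous up to where it is integrated. In particular $|\mathring{{\rm Hess}}\,u|^2$ only needs to be continuous in the interior, and the boundary integrand needs the continuity and positivity of $H$.
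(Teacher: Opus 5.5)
Your proposal is correct and is essentially the paper's argument. Equality in \eqref{eq:thm1} is equivalent to the vanishing of the left-hand side of the deficit identity \eqref{eq:HK-deficit-identity} in Lemma~\ref{lemma2}, hence of the three integrals on its right-hand side; their integrands are nonnegative (by \eqref{Ricci} and $H>0$) and continuous, so each vanishes pointwise, and the converse follows by direct substitution. Your explicit check of continuity and of the full support of $d\sigma$ on $\partial\Omega$ only spells out what the paper's short proof leaves implicit.
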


\begin{proof}
Under the stated assumptions, the three integrals on the right-hand side of
\eqref{eq:HK-deficit-identity} have nonnegative continuous integrands.
Their sum vanishes if and only if each integrand vanishes. This is equivalent to \eqref{eq:tracefree-zero}, \eqref{eq:ricci-equality-p2}, and
\eqref{eq:boundary_relation}.
\end{proof}

We now show that equality determines the nonlinearity along the solution and yields a rigidity conclusion. No pointwise assumption on $f'$ is required.

\begin{proposition}\label{prop:rigidity_equality}
Assume \eqref{Ricci}, $H>0$ on $\partial\Omega$, and $f(0)>0$. If equality holds in \eqref{eq:thm1}, then
\begin{equation}\label{eq:obata_equation}
{\rm Hess}\,u
=
-\left(\frac{f(0)}n+ku\right)g
\qquad\text{in }\Omega,
\end{equation}
and
\begin{equation}\label{eq:boundary_obata}
u_\eta
=
-\frac{(n-1)f(0)}{nH}
\qquad\text{on }\partial\Omega.
\end{equation}
Moreover,
\begin{equation}\label{eq:f_linear}
f(s)=f(0)+nks
\qquad
\text{for every }
s\in[0,\max_{\overline{\Omega}}u],
\end{equation}
and $\mathcal E(u)=0$. Consequently, $\Omega$ is a metric ball and $u$ is radial with respect to its center.
\end{proposition}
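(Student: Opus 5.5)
By Corollary~\ref{cor:HK_equality}, equality in \eqref{eq:thm1} gives three things: \eqref{eq:tracefree-zero}, \eqref{eq:ricci-equality-p2} and \eqref{eq:boundary_relation}. The boundary relation \eqref{eq:boundary_obata} is just \eqref{eq:boundary_relation} divided by $nH>0$. Since \eqref{eq:tracefree-zero} says ${\rm Hess}\,u=\frac{\Delta u}{n}g=-\frac{f(u)}{n}g$, the plan is to determine $f$ along $u$ and then obtain \eqref{eq:obata_equation}.

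\textbf{Step 1: an ODE for $f$ along the solution.} Write ${\rm Hess}\,u=\phi g$ with $\phi=-f(u)/n$, which is $C^1$. Take the divergence of ${\rm Hess}\,u$ in two ways. On one hand, ${\rm div}({\rm Hess}\,u)=\nabla\Delta u+{\rm Ric}(\nabla u)$ (Bochner/Ricci commutation). On the other hand, ${\rm div}(\phi g)=\nabla\phi$ and $\Delta u=n\phi$. Together these give
\[
(n-1)\nabla\phi=-{\rm Ric}(\nabla u).
\]
Pair this with $\nabla u$ and use \eqref{eq:ricci-equality-p2} to get $(n-1)\langle\nabla\phi,\nabla u\rangle=-(n-1)k|\nabla u|^2$. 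Since $\nabla\phi=-\frac{f'(u)}{n}\nabla u$, it follows that
\[
(f'(u)-nk)|\nabla u|^2=0 \quad\text{in }\Omega.
\]
Integrating this identity already gives $\mathcal E(u)=0$. On the open set $\Omega_{\rm reg}$ it gives $f'(u)=nk$.

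\textbf{Step 2: $f'=nk$ on the whole range of $u$.} This is the step I expect to be the main obstacle: we must show that critical points of $u$ do not block the conclusion on $[0,\max u]$. Because ${\rm Hess}\,u=\phi g$, at any critical point where $f(u)\neq0$ the Hessian is nondegenerate, so that critical point is isolated. Consider the set $\{f(u)=0\}$ in the interior. Near any of its points Step~1 shows $\phi$ is constant along flow lines of $\nabla u$, and the Obata-type structure then forces $\phi\equiv0$ locally, so $u$ would be locally affine. I would try to rule this out, or handle it via the explicit radial solution, using $f(0)>0$. Near the boundary, $u_\eta<0$, so $f(u)>0$ there.

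Granting this, the critical set is discrete and $\Omega_{\rm reg}$ is connected and dense. Hence $u(\Omega_{\rm reg})$ is an interval containing $(0,\max u)$. Then $f'(s)=nk$ on $(0,\max u)$, and by continuity of $f'$ on the closed interval. Integrating from $0$ gives \eqref{eq:f_linear}: $f(s)=f(0)+nks$ on $[0,\max u]$. Substituting back into $\phi=-f(u)/n$ yields \eqref{eq:obata_equation}.

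\textbf{Step 3: rigidity.} Set $\psi=\frac{f(0)}n+ku$, so that ${\rm Hess}\,u=-\psi g$ with $\psi>0$ near $\partial\Omega$. Now run the standard Obata/Reilly argument, as in \cite{deLima:2026}.
\begin{itemize}
\item The relation $\nabla|\nabla u|^2=-2\psi\nabla u$ shows that $|\nabla u|$ is a function of $u$. Hence $|\nabla u|$ is constant on the level set $\partial\Omega=\{u=0\}$.
\item Therefore $H$ is constant by \eqref{eq:boundary_obata}.
\item $u$ attains its maximum at a single point $o$, which is a nondegenerate critical point since $f(u(o))\neq0$ by Step~2.
\item Along unit-speed geodesics $\gamma$ from $o$ we have $(u\circ\gamma)''=-\frac{f(0)}{n}-k\,u\circ\gamma$. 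So $u$ depends only on $r=d(o,\cdot)$, and the level sets of $u$ are geodesic spheres about $o$.
\end{itemize}
Consequently $\Omega=\{u>0\}$ is the metric ball centered at $o$ whose radius is the first zero of the radial ODE solution, and $u$ is radial with respect to $o$.
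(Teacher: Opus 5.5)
Your Step~1 is the paper's computation. It correctly gives $(f'(u)-nk)|\nabla u|^2=0$, hence $\mathcal E(u)=0$, and \eqref{eq:boundary_obata} is indeed immediate.

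The gap is Step~2, which you leave open (``I would try to rule this out \dots\ Granting this''). Its heuristic is also wrong: on $\Omega_{\rm reg}$ one has $\nabla\phi=-k\nabla u$, so $\phi$ is not constant along flow lines of $\nabla u$ unless $k=0$. In fact no analysis of the critical set is needed. Your identity already gives $(f'(u)-nk)\nabla u=0$ at \emph{every} point of $\Omega$. Where $\nabla u\neq0$ you may divide by $|\nabla u|^2$, and where $\nabla u=0$ the vector vanishes trivially. Hence $\nabla\big(f(u)-nku\big)=(f'(u)-nk)\nabla u\equiv0$, so $f(u)-nku$ is constant on the connected set $\Omega$. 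By continuity up to $\partial\Omega$, where $u=0$, the constant is $f(0)$. Since $u(\overline\Omega)=[0,\max_{\overline\Omega}u]$ (a connected image containing $0$ and the maximum), this yields \eqref{eq:f_linear} on the whole range, and then \eqref{eq:obata_equation}. This is exactly the paper's argument. Connectedness or density of $\Omega_{\rm reg}$, discreteness of critical points, and Sard-type considerations never enter.

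Step~3 inherits the gap, because you justify nondegeneracy of the maximum point ``by Step~2''. Once $f$ is known to be affine, you still need two facts at $U=\max_{\overline\Omega}u$. First, $\psi(U)=f(U)/n>0$. At a maximum one only has $\psi(U)\geq0$, and if $\psi(U)=0$ the geodesic ODE forces $u\equiv U$ on a normal neighbourhood of every maximum point. Then $\{u=U\}$ is open and closed in $\Omega$, contradicting $u=0$ on $\partial\Omega$. Second, the maximum point is unique. Both have to be written out. The paper avoids redoing this Obata-type analysis by setting $v=u/f(0)$, which is positive in $\Omega$, vanishes on $\partial\Omega$, and satisfies ${\rm Hess}\,v=-(\frac1n+kv)g$. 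It then invokes \cite[Lemma~6]{Farina:2022}.
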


\begin{proof}
By Corollary~\ref{cor:HK_equality},
\begin{equation}
{\rm Hess}\,u
=
\frac{\Delta u}{n}g
=
-\frac{f(u)}n g.
\end{equation}
The boundary relation \eqref{eq:boundary_obata} follows directly
from \eqref{eq:boundary_relation}. Taking the divergence of the Hessian identity and using
\begin{equation*}
{\rm div}({\rm Hess}\,u)
=
d(\Delta u)+{\rm Ric}(\nabla u,\cdot),
\end{equation*}
we obtain
\begin{equation*}
-\frac1n f'(u)\,du
=
-f'(u)\,du
+
{\rm Ric}(\nabla u,\cdot).
\end{equation*}
Hence,
\begin{equation*}
{\rm Ric}(\nabla u,\cdot)
=
\frac{n-1}{n}f'(u)\,du.
\end{equation*}
Evaluating this identity on $\nabla u$ and using \eqref{eq:ricci-equality-p2}, we find
\begin{equation*}
\frac{n-1}{n}
\big(f'(u)-nk\big)|\nabla u|^2
=
0
\qquad\text{in }\Omega.
\end{equation*}
Therefore, $\big(f'(u)-nk\big)\nabla u=0$ in $\Omega,$
including at the critical points of $u$. Equivalently, $\nabla\big(f(u)-nku\big)=0.$ Since $\Omega$ is connected, $f(u)-nku$ is constant. By continuity up to the boundary and the condition $u=0$ there, this constant is $f(0)$. Thus, $f(u)=f(0)+nku$ in $\overline{\Omega}.$ Since $u$ is positive in $\Omega$ and vanishes on $\partial\Omega$, its range on $\overline{\Omega}$ is $[0,\max_{\overline{\Omega}}u]$.
This proves \eqref{eq:f_linear}, and substitution into the Hessian identity gives \eqref{eq:obata_equation}. The preceding argument also yields
\begin{equation}\label{eq:E_zero_and_Ric_zero}
\mathcal E(u)=0,
\qquad
\big[{\rm Ric}-(n-1)kg\big]
(\nabla u,\nabla u)=0
\quad\text{in }\Omega.
\end{equation}

Finally, define $v=u/f(0)$. Since $f(0)>0$, the function $v$ is positive in $\Omega$, vanishes on $\partial\Omega$, and satisfies
\begin{equation}
{\rm Hess}\,v
=
-\left(\frac1n+kv\right)g.
\end{equation}
By \cite[Lemma~6]{Farina:2022}, $\Omega$ is a metric ball and $v$ is radial.
The same conclusion holds for $u$.
\end{proof}

\begin{remark}
The equality conclusion in proposition \ref{prop:rigidity_equality} does not require $\mathcal E(u)\leq0$ as an initial hypothesis. Furthermore, if $\mathcal E(u)\leq0$ and equality holds in Corollary~\ref{cor:HK-integral-condition}, then \eqref{eq:thm1} implies
$\mathcal E(u)=0$ and equality in the refined inequality. Consequently, the same rigidity conclusion follows.
\end{remark}

To conclude the section, when $\mathcal{E}(u) \leq 0$, the next results extends the Theorem 2 of de Lima, Santos and Sindeaux in \cite{deLima:2026}.

\begin{theorem}[Soap Bubble rigidity]\label{thm:soap_truncated}
Let $M^n$ be a Riemannian manifold satisfying \eqref{Ricci}, with $k\in\mathbb R$, and let $\Omega\Subset M$ be a domain with $C^2$-boundary.
Let $u\in C^3(\Omega)\cap C^2(\overline{\Omega})$ be a positive solution of \eqref{P}, with $p=2$ and $f\in C^1(\mathbb R)$. Assume that $H>0$ on $\partial\Omega$ and $f(0)>0$. Define
\begin{equation*}
c
=
-\frac1{|\partial\Omega|}
\int_{\partial\Omega}u_\eta\,d\sigma
=
\frac1{|\partial\Omega|}
\int_\Omega f(u)d\Omega
>0,
\qquad
H_0=\frac{(n-1)f(0)}{nc}.
\end{equation*}
Then
\begin{equation}\label{eq:soap_error_truncated}
\int_{\partial\Omega}
(H_0-H)u_\eta^2d\sigma
\geq
H_0\int_{\partial\Omega}
(u_\eta+c)^2d\sigma
-
\frac{n-1}{n}\mathcal E(u)
\geq
-\frac{n-1}{n}\mathcal E(u).
\end{equation}

Moreover, if $H\geq H_0$ on $\partial\Omega,$ and $\mathcal E(u)\leq0,$
then
\begin{equation*}
\mathcal E(u)=0,
\qquad
H\equiv H_0,
\qquad
u_\eta\equiv-c
\quad\text{on }\partial\Omega,
\end{equation*}
and
\begin{equation}\label{eq:hessian_rigidity_truncated}
{\rm Hess}\,u
=
\frac{\Delta u}{n}g
=
-\frac{f(u)}n g
=
-\left(\frac{f(0)}n+ku\right)g
\quad\text{in }\Omega.
\end{equation}
In particular,
\begin{equation}\label{eq:f_affine_range_truncated}
f(s)=f(0)+nks
\qquad
\text{for every }
s\in[0,\max_{\overline{\Omega}}u].
\end{equation}
Consequently, $\Omega$ is a metric ball and $u$ is radial with respect to its center.
\end{theorem}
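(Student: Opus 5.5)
The plan is to mimic the proof of Theorem~\ref{thm:boundary-deficit}, but with the exact deficit identity of Lemma~\ref{lemma2} in place of the inequality. Write $c_n=(n-1)/n$.

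\textbf{Step 1: boundary inequality.} By \eqref{eq:key-identity}, together with \eqref{Ricci} and the nonnegativity of $|\mathring{{\rm Hess}}\,u|^2$, we get
\begin{equation*}
\int_{\partial\Omega}Hu_\eta^2\,d\sigma\leq -c_nf(0)\int_{\partial\Omega}u_\eta\,d\sigma+c_n\mathcal E(u)=c_nf(0)c|\partial\Omega|+c_n\mathcal E(u).
\end{equation*}
Positivity of $c$ follows from the boundary point lemma ($u_\eta<0$, since $f(0)>0$ and $u>0$), exactly as in Theorem~\ref{thm:main}. Since $H_0c=c_nf(0)$, the right-hand side equals $H_0c^2|\partial\Omega|+c_n\mathcal E(u)$.

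\textbf{Step 2: the first inequality in \eqref{eq:soap_error_truncated}.} Expand
\begin{equation*}
\int_{\partial\Omega}(u_\eta+c)^2\,d\sigma=\int_{\partial\Omega}u_\eta^2\,d\sigma-c^2|\partial\Omega|,
\end{equation*}
using $\int_{\partial\Omega}u_\eta\,d\sigma=-c|\partial\Omega|$. Then
\begin{equation*}
\int_{\partial\Omega}(H_0-H)u_\eta^2\,d\sigma\geq H_0\int_{\partial\Omega}u_\eta^2\,d\sigma-H_0c^2|\partial\Omega|-c_n\mathcal E(u)=H_0\int_{\partial\Omega}(u_\eta+c)^2\,d\sigma-c_n\mathcal E(u).
\end{equation*}
The second inequality in \eqref{eq:soap_error_truncated} follows because $H_0>0$.

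\textbf{Step 3: rigidity.} Assume $H\geq H_0$ and $\mathcal E(u)\leq0$. The left side of \eqref{eq:soap_error_truncated} is then $\leq0$, while the right side is $\geq -c_n\mathcal E(u)\geq0$. Hence every inequality in the chain is an equality, which gives:
\begin{itemize}
\item $\mathcal E(u)=0$;
\item $\int_{\partial\Omega}(u_\eta+c)^2\,d\sigma=0$, so $u_\eta\equiv-c$;
\item $\int_{\partial\Omega}(H_0-H)u_\eta^2\,d\sigma=0$ with $u_\eta^2=c^2>0$ and $H_0-H\le 0$, so $H\equiv H_0$;
\item equality in Step~1, so $\mathring{{\rm Hess}}\,u=0$ and $[{\rm Ric}-(n-1)kg](\nabla u,\nabla u)=0$.
\end{itemize}

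\textbf{Step 4: reduction to Proposition~\ref{prop:rigidity_equality}.} I expect this reduction to be the only delicate point. Check directly that equality holds in \eqref{eq:thm1}, using the identity \eqref{eq:HK-deficit-identity}. The interior terms vanish by Step~3. For the boundary term, we have $Hu_\eta+c_nf(0)=-H_0c+H_0c=0$. So the right-hand side of \eqref{eq:HK-deficit-identity} is zero, which is equality in \eqref{eq:thm1}. Proposition~\ref{prop:rigidity_equality} then yields \eqref{eq:hessian_rigidity_truncated} and \eqref{eq:f_affine_range_truncated}, with the middle equality ${\rm Hess}\,u=-\frac{f(u)}{n}g$ coming from $\Delta u=-f(u)$. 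It also yields, via \cite[Lemma~6]{Farina:2022}, that $\Omega$ is a metric ball and $u$ is radial.
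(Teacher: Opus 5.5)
Your proposal is correct and follows essentially the paper's argument. The paper keeps the nonnegative Hessian and Ricci terms to write the exact deficit identity \eqref{eq:soap-deficit-identity}, reading off the inequality and the vanishing of each term at once; you drop them in Step~1 and recover them by forcing equality in your chain, and your final check of the three conditions in \eqref{eq:HK-deficit-identity} before invoking Proposition~\ref{prop:rigidity_equality} is what the paper does via Corollary~\ref{cor:HK_equality}.
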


\begin{proof}
Since $f(0)>0$, continuity implies that $f(u)>0$ in a sufficiently small neighborhood of $\partial\Omega$. The boundary point lemma,
together with $u>0$ in $\Omega$ and $u=0$ on $\partial\Omega$, gives
$u_\eta<0$ on $\partial\Omega$. Hence $c>0$ and $H_0>0$. By \eqref{eq:key-identity},
\begin{equation}
\begin{split}
&\int_\Omega
|\mathring{{\rm Hess}}\,u|^2d\Omega
+
\int_\Omega
\big[{\rm Ric}-(n-1)kg\big]
(\nabla u,\nabla u)d\Omega
\\
&\qquad=
\frac{n-1}{n}f(0)c|\partial\Omega|
-
\int_{\partial\Omega}Hu_\eta^2d\sigma
+
\frac{n-1}{n}\mathcal E(u).
\end{split}
\end{equation}
On the other hand, the definition of $c$ gives
\begin{equation}
\int_{\partial\Omega}(u_\eta+c)^2d\sigma
=
\int_{\partial\Omega}u_\eta^2d\sigma
-
c^2|\partial\Omega|.
\end{equation}
Since $H_0c^2
=
\frac{n-1}{n}f(0)c,
$
combining these identities yields
\begin{equation}\label{eq:soap-deficit-identity}
\begin{split}
\int_{\partial\Omega}
(H_0-H)u_\eta^2d\sigma
&=
H_0\int_{\partial\Omega}
(u_\eta+c)^2d\sigma+
\int_\Omega
|\mathring{{\rm Hess}}\,u|^2d\Omega
\\
&\quad+
\int_\Omega
\big[{\rm Ric}-(n-1)kg\big]
(\nabla u,\nabla u)d\Omega-
\frac{n-1}{n}\mathcal E(u).
\end{split}
\end{equation}
The Ricci lower bound and $H_0>0$ now imply
\eqref{eq:soap_error_truncated}. Suppose additionally that $H\geq H_0$ and $\mathcal E(u)\leq0$. The left-hand side of \eqref{eq:soap-deficit-identity} is nonpositive, whereas every term on its right-hand side is nonnegative. Therefore, all these terms vanish. In particular,
\begin{equation}
\mathcal E(u)=0,
\qquad
u_\eta=-c
\quad\text{on }\partial\Omega,
\qquad 
\mathring{{\rm Hess}}\,u=0
\quad\text{in }\Omega,
\end{equation}
and
\begin{equation}
\big[{\rm Ric}-(n-1)kg\big]
(\nabla u,\nabla u)=0
\quad\text{in }\Omega.
\end{equation}
Furthermore,
\begin{equation}
0
=
\int_{\partial\Omega}
(H-H_0)u_\eta^2d\sigma
=
c^2\int_{\partial\Omega}(H-H_0)d\sigma.
\end{equation}              
Since $c>0$ and $H-H_0\geq0$, continuity implies $H\equiv H_0$ on $\partial\Omega$. Since
\begin{equation}
Hu_\eta=-H_0c=-\frac{n-1}{n}f(0)
\qquad\text{on }\partial\Omega,
\end{equation}
conditions \eqref{eq:tracefree-zero},\eqref{eq:ricci-equality-p2}, and \eqref{eq:boundary_relation} hold. By Corollary~\ref{cor:HK_equality}, equality holds in \eqref{eq:thm1}. Hence,  Proposition~\ref{prop:rigidity_equality} implies that \eqref{eq:hessian_rigidity_truncated},
\eqref{eq:f_affine_range_truncated}, and the claimed rigidity.
\end{proof}


\section{An Example Beyond the Global Derivative Bound}

In this section, we construct a smooth nonlinearity for which the global condition
\begin{equation}
f'(s)\leq nk
\qquad\text{for every }s\in\mathbb R
\end{equation}
fails, while the integral condition
\begin{equation}
\mathcal E(u)
=
\int_\Omega
\big(f'(u)-nk\big)|\nabla u|^2d\Omega
\leq0
\end{equation}
holds for a positive solution of \eqref{P}, with $p=2$. The construction illustrates that the behavior of $f$ outside the range of the solution need not satisfy the structural bound.

Let $(M^n,g)$ satisfy ${\rm Ric}\geq(n-1)kg$, with $k>0$, and let
$\Omega\Subset M$ be a domain with smooth boundary. Write $b=nk>0$. We first introduce the smooth nondecreasing cutoff
function
\begin{equation}\label{eq:example-cutoff}
\chi(t)
=
\begin{cases}
0, & t\leq0,\\[2mm]
\displaystyle
\frac{e^{-1/t}}
{e^{-1/t}+e^{-1/(1-t)}},
& 0<t<1,\\[4mm]
1, & t\geq1.
\end{cases}
\end{equation}
Thus, $\chi\in C^\infty(\mathbb R)$,
$0\leq\chi\leq1$, and $\chi'\geq0$. Fix constants $a,A,L,\delta,\omega>0$ such that
\begin{equation}
0<\delta<L,
\qquad
\delta\omega>b.
\end{equation}
Define
\begin{equation}\label{eq:example-nonlinearity}
\begin{split}
f(s)
&=
\big(1-\chi(s-A)\big)(a+bs)
\\
&\quad+
\chi(s-A)
\left(
-L
+
\delta\chi(s-A-1)
\sin\big(\omega(s-A-2)\big)
\right).
\end{split}
\end{equation}
This function belongs to $C^\infty(\mathbb R)$. For $s\leq A$, it satisfies $f(s)=a+bs,$ and, in particular, $f(0)=a>0$. On the transition interval $[A,A+1]$, the second cutoff vanishes, so
\begin{equation}
f(s)
=
\big(1-\chi(s-A)\big)(a+bs)
-
L\chi(s-A).
\end{equation}
Differentiating, we obtain
\begin{equation}
f'(s)
=
b\big(1-\chi(s-A)\big)
-
\chi'(s-A)(a+bs+L)
\leq b.
\end{equation}
Together with the linear expression on $[0,A]$, this gives
\begin{equation}\label{Eq0}
f'(s)\leq nk
\qquad\text{for every }s\in[0,A+1].
\end{equation}

For $s\geq A+1$, the first cutoff is equal to one. Consequently,
\begin{equation*}
f(s)
=
-L
+
\delta\chi(s-A-1)
\sin\big(\omega(s-A-2)\big)
\leq-L+\delta<0.
\end{equation*}
On the other hand, for $s\geq A+2$,
\begin{equation*}
f(s)
=
-L+\delta\sin\big(\omega(s-A-2)\big),
\end{equation*}
and therefore
\begin{equation*}
f'(s)
=
\delta\omega\cos\big(\omega(s-A-2)\big).
\end{equation*}
In particular, at
\begin{equation*}
s_j=A+2+\frac{2\pi j}{\omega},
\qquad j=0,1,2,\ldots,
\end{equation*}
we have $f'(s_j)=\delta\omega>nk.$ Thus, the global derivative bound fails. We now consider
\begin{equation}\label{eq:example_D}
\begin{cases}
-\Delta u=f(u), & \text{in }\Omega,\\
u=0, & \text{on }\partial\Omega.
\end{cases}
\end{equation}
The constant functions $\underline u=0$ and $\overline u=A+1$, form an ordered subsolution and supersolution. Indeed,
\begin{equation}
-\Delta\underline u=0<f(0)=a
\end{equation}
and
\begin{equation}
-\Delta\overline u=0>f(A+1)=-L,
\end{equation}
while
$\underline u=0\leq\overline u$ on $\partial\Omega$. For completeness, choose $\mu>0$ sufficiently large that the function
\begin{equation}
s\longmapsto f(s)+\mu s
\end{equation}
is nondecreasing on $[0,A+1]$. Starting with $u_0=0$, consider the iteration
\begin{equation*}
\begin{cases}
(-\Delta+\mu)u_{j+1}
=
f(u_j)+\mu u_j,
& \text{in }\Omega,\\
u_{j+1}=0,
& \text{on }\partial\Omega.
\end{cases}
\end{equation*}
The comparison principle gives
\begin{equation}
0\leq u_j\leq u_{j+1}\leq A+1.
\end{equation}
Standard elliptic estimates and compactness yield a solution of \eqref{eq:example_D} satisfying $0\leq u\leq A+1$. Since the domain, metric, and
nonlinearity are smooth, elliptic regularity gives $u\in C^\infty(\overline{\Omega})$. This solution is positive in $\Omega$.
Indeed, if $u(x_0)=0$ at an interior point, then $x_0$ is a minimum and $\Delta u(x_0)\geq0$, whereas the equation gives
\begin{equation*}
-\Delta u(x_0)=f(0)=a>0,
\end{equation*}
a contradiction.

Let $U=\max_{\overline{\Omega}}u.$ Since $u>0$ in $\Omega$ and vanishes on
$\partial\Omega$, its maximum is attained at an interior point $x_1$. If $U\geq A+1$, then $-\Delta u(x_1)\geq0,$ but
\begin{equation}
-\Delta u(x_1)=f(U)<0,
\end{equation}
which is impossible. Therefore,
\begin{equation}
0<u<A+1
\qquad\text{in }\Omega.
\end{equation}
By \eqref{Eq0}, it follows that $f'(u)\leq nk$ in $\Omega.$ Hence,
\begin{equation}
\mathcal E(u)
=
\int_\Omega
\big(f'(u)-nk\big)|\nabla u|^2d\Omega
\leq0.
\end{equation}
We have thus obtained a smooth nonlinearity violating the global derivative bound, together with a positive solution satisfying the integral
condition. Figure~\ref{fig:example-nonlinearity} illustrates the construction for
$$
a=b=A=1,
\qquad
L=2,
\qquad
\delta=\frac12,
\qquad
\omega=8.
$$

\begin{figure}[H]
\centering
\begin{tikzpicture}
\begin{axis}[
width=0.94\linewidth,
height=6.4cm,
axis lines=middle,
xmin=-0.5,
xmax=4.5,
ymin=-2.9,
ymax=3.1,
xlabel={$s$},
ylabel={$f(s)$},
xtick={0,1,2,3,4},
xticklabels={$0$,$A$,$A+1$,$A+2$,$4$},
ytick={-2,-1,0,1,2},
tick label style={font=\small},
label style={font=\small},
grid=major,
grid style={gray!15},
clip=false,
declare function={
    cut(\t)=
    exp(-1/max(\t,0.0001))/
    (
    exp(-1/max(\t,0.0001))
    +
    exp(-1/max(1-\t,0.0001))
    );
}
]

\path[fill=blue!6, fill opacity=0.5]
    (axis cs:0,-2.8)
    rectangle
    (axis cs:2,2.9);

\addplot[
    blue!70!black,
    very thick,
    domain=-0.5:1,
    samples=2
]
    {1+x};

\addplot[
    blue!70!black,
    very thick,
    domain=1:2,
    samples=180
]
    {(1-cut(x-1))*(1+x)-2*cut(x-1)};

\addplot[
    blue!70!black,
    very thick,
    domain=2:3,
    samples=180
]
    {-2+0.5*cut(x-2)*sin(deg(8*(x-3)))};

\addplot[
    blue!70!black,
    very thick,
    domain=3:4.5,
    samples=240
]
    {-2+0.5*sin(deg(8*(x-3)))};

\draw[densely dashed,gray!70]
    (axis cs:2,-2.8)
    --
    (axis cs:2,2.9);

\addplot[
    red!75!black,
    dashed,
    thick,
    domain=2.84:3.16,
    samples=2
]
    {-2+4*(x-3)};

\addplot[
    only marks,
    mark=*,
    mark size=1.8pt,
    red!75!black
]
    coordinates {(3,-2)};

\node[
    font=\small,
    inner sep=2pt
] at (axis cs:0.72,2.3)
    {$f'(s)\leq b$};

\node[
    font=\small,
    fill=white,
    inner sep=2pt
] at (axis cs:3.43,1.9)
    {$f(s)<0\quad(s\geq A+1)$};

\node[
    font=\small,
    text=red!75!black,
    fill=white,
    inner sep=2pt
] at (axis cs:3.45,-0.85)
    {$f'(A+2)=4>b=1$};

\draw[->,red!75!black]
    (axis cs:3.22,-1.02)
    --
    (axis cs:3.03,-1.88);

\end{axis}
\end{tikzpicture}
\caption{
The nonlinearity \eqref{eq:example-nonlinearity} for $a=b=A=1$, $L=2$, $\delta=1/2$, and $\omega=8$. The shaded interval contains the range of the positive solution and satisfies $f'\leq b=nk$. Outside this interval, the function remains negative but has portions with $f'>b$.
}
\label{fig:example-nonlinearity}
\end{figure}

\begin{remark}
This example distinguishes the integral condition from a derivative bound imposed on all of $\mathbb R$. It does not establish a strict distinction between $\mathcal E(u)\leq0$ and the pointwise condition restricted to $[0,\max_{\overline{\Omega}}u]$, because the latter condition also holds for the solution constructed here.
\end{remark}

\section*{Declarations}

\subsection*{Data availability statement} 
This manuscript has no associated data.

\subsection*{Competing interests} 
The authors have no competing interests to declare that are relevant to the content of this article.

\subsection*{Authors contributions}
All authors developed the theoretical formalism, performed the analytic calculations, discussed the results and contributed to write the manuscript.

\subsection*{Funding}
The first author is partially supported by CNPq, Brazil,
grant $\text{PDJ}-153013/2025-7$ and FACEPE, Brazil, grant BFP$-0015-1.01/25$. The second author is partially supported by CNPq, Brazil, Universal Project grant number 406078/2025-4.

\bibliographystyle{plain}

\end{document}